\documentclass[preprint,12pt]{elsarticle}

\usepackage{amsmath,amssymb,amsthm,mathtools}
\usepackage{enumitem}
\usepackage{geometry}
\usepackage[hidelinks]{hyperref}
\usepackage[utf8]{inputenc}
\numberwithin{equation}{section}

\newtheorem{theorem}{Theorem}[section]
\newtheorem{lemma}[theorem]{Lemma}
\newtheorem{proposition}[theorem]{Proposition}
\newtheorem{corollary}[theorem]{Corollary}
\newtheorem{definition}[theorem]{Definition}
\newtheorem{remark}[theorem]{Remark}
\newtheorem{example}[theorem]{Example}

\newcommand{\E}{\mathcal{E}}

\newcommand{\cl}{\operatorname{cl}}
\newcommand{\Fix}{\operatorname{Fix}}
\newcommand{\diam}{\operatorname{diam}}

\begin{document}

\begin{frontmatter}

\title{Lifted Takahashi Convexity on the Isbell-Convex Hull of an Asymmetrically Normed Real Vector Space}

\author{Philani Rodney Majozi}
\ead{Philani.Majozi@nwu.ac.za}

\author{Mcedisi Sphiwe Zweni}
\ead{Sphiwe.Zweni@nwu.ac.za}

\address{School of Mathematical and Statistical Sciences, Pure and Applied Analytics, North-West University, Mahikeng Campus, South Africa}

\begin{abstract}
K\"unzi and Yildiz introduced convexity structures in the sense of Takahashi
for \(T_{0}\)-quasi-metric spaces. In this article, we continue this line of
study on the Isbell-convex hull of an asymmetrically normed real vector space.
Using the canonical hull quasi-metric and the vector-space operations on
\(\E(X,\|\cdot\|)\), we define a lifted convexity structure
\[
\mathbb W(f,g,\lambda)=\lambda f\oplus(1-\lambda)g
\]
and show that \((\E(X,\|\cdot\|),q_{\E},\mathbb W)\) is a convex
\(T_{0}\)-quasi-metric space. We further prove compatibility with the canonical
embedding and relate the construction to \(W\)-convexity of minimal function
pairs.
\end{abstract}

\begin{keyword}
Takahashi convexity \sep $T_{0}$-quasi-metric space \sep Isbell-convex hull
\sep asymmetrically normed space \sep minimal function pair \sep nonexpansive mapping
\\[2pt]
MSC: Primary 54E35\sep 54E15 \sep Secondary 46B20\sep 46A40\sep 54H25\sep 47H10
\end{keyword}

\end{frontmatter}

\section{Introduction}

Convexity structures in the sense of Takahashi provide a flexible way to
transfer classical convex-geometric arguments to settings where no linear
structure is available.  In his seminal work, Takahashi introduced a convexity
map on a metric space \((X,d)\) as a rule \(W(x,y,\lambda)\) producing
``metric convex combinations'' and satisfying the characteristic inequality
\[
d\bigl(z,W(x,y,\lambda)\bigr)
\le
\lambda d(z,x)+(1-\lambda)d(z,y),
\qquad z\in X,\ \lambda\in[0,1].
\]
This framework leads naturally to convex subsets, metric segments and fixed
point principles for nonexpansive mappings in convex metric spaces
\cite{Takahashi1970}.  Later developments refined this theory by considering
additional rigidity properties, such as uniqueness and higher-dimensional
barycentric operations, and by connecting convexity structures with compactness
and fixed point theorems for condensing multifunctions \cite{Talman1977}.

A systematic asymmetric counterpart was developed by K\"unzi and Yildiz, who
extended Takahashi's theory to \(T_{0}\)-quasi-metric spaces.  In this setting
one must require two directed inequalities: one for the quasi-metric \(q\) and
one for its conjugate \(q^{t}\).  This two-sided requirement is essential in
the asymmetric case and leads to the notion of a convex
\(T_{0}\)-quasi-metric space \((X,q,W)\).  Their theory includes permanence
properties, double closure for convex sets, uniqueness of convexity structures,
and a segment theory in which unique convexity structures yield isometric
parametrisations by directed intervals \cite{KunziYildiz2016}.

Injective-envelope ideas from metric geometry also admit an asymmetric
analogue through the Isbell-convex hull.  For di-spaces, that is,
\(T_{0}\)-quasi-metric spaces, Kemajou, K\"unzi and Olela-Otafudu introduced
Isbell-convexity as the appropriate replacement of hyperconvexity and proved
that a di-space is Isbell-convex if and only if it is di-injective
\cite{KemajouKunziOlela2012}.  They also gave an explicit construction of the
Isbell-hull by means of extremal ample function pairs.  This hull is uniquely
determined up to isometry and contains the original di-space isometrically by
the canonical embedding \(x\mapsto f_x\).

When the di-space arises from an asymmetrically normed real vector space
\((X,\|\cdot\|)\), the corresponding hull will be denoted by
\(\E(X,\|\cdot\|)\).  A distinctive feature of this asymmetric linear setting
is that the Isbell-hull is not only a directed metric enlargement.  Conradie,
K\"unzi and Olela-Otafudu showed that \(\E(X,\|\cdot\|)\) admits scalar
multiplication and an addition operation \(\oplus\), turning it into a real
vector space; moreover, under the specialization order induced by the hull
quasi-metric, it becomes a Dedekind complete vector lattice
\cite{ConradieKunziOlela2017}.  Related work of Olela-Otafudu on extremal
function pairs further supports the view of the hull as a natural
convex-analytic object associated with \((X,\|\cdot\|)\) \cite{Olela2014}.

The present paper is a direct continuation of \cite{OlelaZweni2023}, where
\(W\)-convexity for real-valued function pairs on a convex asymmetrically
normed space \((X,\|\cdot\|,W)\) was studied.  In that work it was shown that,
under suitable translation-invariance and homogeneity assumptions on \(W\),
minimal pairs in \(\E(X,\|\cdot\|)\) are \(W\)-convex when regarded as
function pairs on the base space \(X\).  Thus the predecessor paper established
a functional convexity principle: hull elements satisfy Jensen-type
inequalities on \(X\).

The present paper takes the next step.  Instead of studying only convexity of
hull elements as functions on \(X\), we place a convexity structure directly
on the hull itself.  The guiding question is the following.

\begin{quote}
\emph{Can the Isbell-convex hull \(\E(X,\|\cdot\|)\) itself be endowed with a
Takahashi convexity structure that is compatible with the canonical embedding
\(i:X\to\E(X,\|\cdot\|)\) and with the algebraic operations on the hull?}
\end{quote}

We answer this question affirmatively.  Using the canonical Isbell-hull
di-metric \(q_{\E}\) and the vector-space operations on
\(\E(X,\|\cdot\|)\), we define the lifted barycentric map
\[
\mathbb W:\E(X,\|\cdot\|)\times \E(X,\|\cdot\|)
\times[0,1]\longrightarrow \E(X,\|\cdot\|),
\qquad
\mathbb W(f,g,\lambda)=\lambda f\oplus(1-\lambda)g.
\]
We prove that
\[
(\E(X,\|\cdot\|),q_{\E},\mathbb W)
\]
is a convex \(T_{0}\)-quasi-metric space in the sense of
Takahashi--K\"unzi--Yildiz.  Thus the hull carries an intrinsic directed
convex geometry, not merely convexity inequalities for its elements viewed as
functions on \(X\).

We also prove compatibility with the original space.  If
\[
S(x,y,\lambda)=\lambda x+(1-\lambda)y
\]
is the standard affine convexity structure on \(X\), then the canonical
embedding \(i:X\to\E(X,\|\cdot\|)\) satisfies
\[
i\bigl(S(x,y,\lambda)\bigr)
=
\mathbb W\bigl(i(x),i(y),\lambda\bigr),
\qquad x,y\in X,\ \lambda\in[0,1].
\]
Consequently \(i(X)\) is \(\mathbb W\)-convex in the hull.

After constructing the lifted convexity structure, we relate it to the earlier
functional \(W\)-convexity of minimal pairs.  We show that, under natural
translation-invariance and homogeneity assumptions, \(W\)-convexity is stable
under the hull operations used to define \(\mathbb W\).  We then study hull
segments and show that, when the lifted convexity structure is unique, the
segment between two hull elements is isometrically parametrised by a standard
directed interval.  Examples and counterexamples illustrate the role of the
directed quasi-metric structure and show that convexity for the symmetrised
metric does not in general imply convexity for the underlying
\(T_{0}\)-quasi-metric.

\medskip

\noindent\textbf{Organisation.}
Section~2 recalls the necessary background on \(T_{0}\)-quasi-metrics,
Takahashi convexity structures, asymmetrically normed spaces, Isbell-convex
hulls and the algebraic operations on the hull.  Section~3 constructs the
lifted Takahashi convexity structure on \(\E(X,\|\cdot\|)\) and proves
compatibility with the canonical embedding.  Section~4 relates intrinsic hull
convexity to \(W\)-convexity of minimal function pairs.  Section~5 studies
segments and uniqueness of the lifted structure.  Section~6 gives examples,
counterexamples and concluding directions.

\section{Preliminaries}\label{sec:prelim}

In this section we recall the basic notions used throughout the paper.  We
first fix the notation for \(T_{0}\)-quasi-metric spaces and Takahashi
convexity structures.  We then recall asymmetrically normed real vector spaces,
the associated \(T_{0}\)-quasi-metric, the Isbell-convex hull, and the
algebraic operations on the hull.

\subsection{\texorpdfstring{\(T_{0}\)}{T0}-quasi-metrics and directed notation}
\label{subsec:qmetric}

\begin{definition}\label{def:qpm}
Let \(X\) be a set.  A map
\[
q:X\times X\longrightarrow [0,\infty)
\]
is called a \emph{quasi-pseudometric} if
\[
q(x,x)=0\qquad (x\in X),
\]
and
\[
q(x,z)\le q(x,y)+q(y,z)
\qquad (x,y,z\in X).
\]
It is called a \emph{\(T_{0}\)-quasi-metric} if, in addition,
\[
q(x,y)=0=q(y,x)\quad\Longrightarrow\quad x=y.
\]
A \(T_{0}\)-quasi-metric space will also be called a \emph{di-space}.
\end{definition}

Given a quasi-pseudometric \(q\), its conjugate is
\[
q^{t}(x,y):=q(y,x),
\]
and its symmetrisation is
\[
q^{s}:=q\vee q^{t},\qquad
q^{s}(x,y)=\max\{q(x,y),q(y,x)\}.
\]
If \(q\) is \(T_{0}\), then \(q^{s}\) is a metric.

For \(a,b\in\mathbb R\), we write
\[
a\dot{-}b:=\max\{a-b,0\}.
\]
The standard \(T_{0}\)-quasi-metric on \(\mathbb R\) is
\[
u(a,b)=a\dot{-}b.
\]

For \(x\in X\) and \(\varepsilon>0\), the open \(q\)-ball is
\[
B_q(x,\varepsilon)=\{y\in X:q(x,y)<\varepsilon\}.
\]
For \(\varepsilon\ge 0\), the closed \(q\)-ball is
\[
C_q(x,\varepsilon)=\{y\in X:q(x,y)\le \varepsilon\}.
\]
The topology generated by the open \(q\)-balls is denoted by \(\tau(q)\).

\begin{definition}\label{def:doubleclosure}
Let \((X,q)\) be a \(T_{0}\)-quasi-metric space and let \(A\subseteq X\).
The \emph{double closure} of \(A\) is
\[
\cl_{\tau(q)}A\cap \cl_{\tau(q^{t})}A.
\]
We say that \(A\) is \emph{doubly closed} if
\[
A=\cl_{\tau(q)}A\cap \cl_{\tau(q^{t})}A.
\]
\end{definition}

\subsection{Takahashi convexity structures}
\label{subsec:TCS}

\begin{definition}\label{def:TakahashiW}
Let \((X,q)\) be a \(T_{0}\)-quasi-metric space.  A map
\[
W:X\times X\times[0,1]\longrightarrow X
\]
is called a \emph{Takahashi convexity structure}, or briefly a \emph{TCS}, if
for all \(x,y,z\in X\) and all \(\lambda\in[0,1]\),
\[
q\bigl(z,W(x,y,\lambda)\bigr)
\le
\lambda q(z,x)+(1-\lambda)q(z,y),
\]
and
\[
q\bigl(W(x,y,\lambda),z\bigr)
\le
\lambda q(x,z)+(1-\lambda)q(y,z).
\]
In this case \((X,q,W)\) is called a \emph{convex \(T_{0}\)-quasi-metric space}.
\end{definition}

\begin{remark}\label{rem:TCS-dual}
The second inequality in Definition~\ref{def:TakahashiW} is the first
inequality written for the conjugate quasi-metric \(q^{t}\).  Thus a TCS on
\((X,q)\) is simultaneously compatible with both directed distances \(q\) and
\(q^{t}\).
\end{remark}

\begin{proposition}\label{prop:qts}
Let \((X,q,W)\) be a convex \(T_{0}\)-quasi-metric space. Then
\((X,q^{t},W)\) is also a convex \(T_{0}\)-quasi-metric space. Moreover,
\((X,q^{s},W)\) is a convex metric space in the sense of Takahashi.
\end{proposition}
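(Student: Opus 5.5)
The plan is to check the definition directly, first for \(q^{t}\) and then for \(q^{s}\), using only Definition~\ref{def:TakahashiW} and elementary facts about maxima.

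\emph{The conjugate.} First, \(q^{t}\) is again a \(T_{0}\)-quasi-metric. It vanishes on the diagonal. The triangle inequality \(q^{t}(x,z)=q(z,x)\le q(z,y)+q(y,x)=q^{t}(x,y)+q^{t}(y,z)\) holds. The \(T_{0}\) condition is symmetric in the two arguments. Next, the two defining inequalities of a TCS for \(q^{t}\) are exactly the two inequalities for \(q\), with their roles interchanged. Indeed,
\[
q^{t}\bigl(z,W(x,y,\lambda)\bigr)=q\bigl(W(x,y,\lambda),z\bigr)\le \lambda q(x,z)+(1-\lambda)q(y,z)=\lambda q^{t}(z,x)+(1-\lambda)q^{t}(z,y),
\]
and similarly for the other inequality. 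This is precisely the content of Remark~\ref{rem:TCS-dual}, so \((X,q^{t},W)\) is convex.

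\emph{The symmetrisation.} Since \(q\) is \(T_{0}\), \(q^{s}\) is a metric, as noted after Definition~\ref{def:qpm}. Since \(q^{s}\) is symmetric, Takahashi's condition reduces to a single inequality. Fix \(x,y,z\) and \(\lambda\), and write \(w=W(x,y,\lambda)\). The two TCS inequalities give
\[
q(z,w)\le \lambda q(z,x)+(1-\lambda)q(z,y)\le \lambda q^{s}(z,x)+(1-\lambda)q^{s}(z,y),
\]
and
\[
q(w,z)\le \lambda q(x,z)+(1-\lambda)q(y,z)\le \lambda q^{s}(z,x)+(1-\lambda)q^{s}(z,y).
\]
The second step in each line uses monotonicity together with \(\lambda,1-\lambda\ge 0\). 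Taking the maximum of the two left-hand sides yields
\[
q^{s}(z,w)\le \lambda q^{s}(z,x)+(1-\lambda)q^{s}(z,y),
\]
which is Takahashi's inequality for \((X,q^{s},W)\).

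There is no real obstacle here. The only point needing care is that the maximum of the two bounds is dominated by the same convex combination of symmetrised distances. This works because each of \(q\) and \(q^{t}\) is pointwise dominated by \(q^{s}\), so no interchange of maxima with sums is required.
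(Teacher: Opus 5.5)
Your proof is correct and takes essentially the same approach as the paper. The \(q^{t}\) case is the interchange of the two defining inequalities, and the \(q^{s}\) case bounds both \(q(z,w)\) and \(q(w,z)\) by convex combinations that are then dominated by \(q^{s}\); you additionally make explicit the monotonicity step and the check that \(q^{t}\) is again a \(T_{0}\)-quasi-metric, both of which the paper leaves implicit.
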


\begin{proof}
The assertion for \(q^{t}\) follows directly from the two defining
inequalities in Definition~\ref{def:TakahashiW}. Indeed, since
\(q^{t}(x,y)=q(y,x)\), the first Takahashi inequality for \(q^{t}\) is exactly
the second Takahashi inequality for \(q\), and the second Takahashi inequality
for \(q^{t}\) is exactly the first Takahashi inequality for \(q\).

For \(q^{s}=q\vee q^{t}\), let \(x,y,z\in X\) and \(\lambda\in[0,1]\). Then
\[
q^{s}\bigl(z,W(x,y,\lambda)\bigr)
=
\max\bigl\{
q\bigl(z,W(x,y,\lambda)\bigr),
q^{t}\bigl(z,W(x,y,\lambda)\bigr)
\bigr\}.
\]
Using the convexity inequalities for \(q\) and \(q^{t}\), we obtain
\[
q^{s}\bigl(z,W(x,y,\lambda)\bigr)
\le
\lambda q^{s}(z,x)+(1-\lambda)q^{s}(z,y).
\]
Similarly,
\[
q^{s}\bigl(W(x,y,\lambda),z\bigr)
\le
\lambda q^{s}(x,z)+(1-\lambda)q^{s}(y,z).
\]
Thus \((X,q^{s},W)\) is a convex metric space.
\end{proof}

\begin{definition}\label{def:Wconvexset}
Let \((X,q,W)\) be a convex \(T_{0}\)-quasi-metric space.  A subset
\(C\subseteq X\) is called \emph{\(W\)-convex} if
\[
W(x,y,\lambda)\in C
\]
whenever \(x,y\in C\) and \(\lambda\in[0,1]\).
\end{definition}

\begin{definition}\label{def:synchronized}
A TCS \(W\) on \((X,q)\) is called \emph{synchronized} if
\[
W(x,y,\lambda)=W(y,x,1-\lambda)
\]
for all \(x,y\in X\) and \(\lambda\in[0,1]\).
\end{definition}

\begin{definition}\label{def:propertyS}
A TCS \(W\) on \((X,q)\) is said to have \emph{property \({\rm(S)}\)} if
\[
q\bigl(W(x,y,\lambda),W(x',y',\lambda)\bigr)
\le
\lambda q(x,x')+(1-\lambda)q(y,y')
\]
for all \(x,y,x',y'\in X\) and all \(\lambda\in[0,1]\).
\end{definition}

\begin{definition}\label{def:uniqueTCS}
Let \(W\) be a TCS on a \(T_{0}\)-quasi-metric space \((X,q)\).  We say that
\(W\) is a \emph{unique TCS} if whenever \(w\in X\) satisfies, for some
\(x,y\in X\) and \(\lambda\in[0,1]\),
\[
q(z,w)\le \lambda q(z,x)+(1-\lambda)q(z,y)
\]
and
\[
q(w,z)\le \lambda q(x,z)+(1-\lambda)q(y,z)
\]
for every \(z\in X\), then
\[
w=W(x,y,\lambda).
\]
\end{definition}

\subsection{Asymmetrically normed real vector spaces}
\label{subsec:asynorm}

\begin{definition}\label{def:asynorm}
Let \(X\) be a real vector space.  A map
\[
\|\cdot\|:X\longrightarrow [0,\infty)
\]
is called an \emph{asymmetric seminorm} if, for all \(x,y\in X\) and all
\(\alpha\ge 0\),
\[
\|\alpha x\|=\alpha\|x\|,
\qquad
\|x+y\|\le \|x\|+\|y\|.
\]
If, in addition,
\[
\|x\|=\|-x\|=0\quad\Longrightarrow\quad x=0,
\]
then \(\|\cdot\|\) is called an \emph{asymmetric norm}, and
\((X,\|\cdot\|)\) is called an \emph{asymmetrically normed real vector space}.
\end{definition}

The conjugate asymmetric norm is
\[
\|x\|^{t}:=\|-x\|,
\]
and the symmetrised norm is
\[
\|x\|^{s}:=\max\{\|x\|,\|-x\|\}.
\]

Throughout this paper, the asymmetric norm \(\|\cdot\|\) induces the
\(T_{0}\)-quasi-metric
\[
q_{\|\cdot\|}(x,y):=\|x-y\|,
\qquad x,y\in X.
\]
Its conjugate is
\[
q_{\|\cdot\|}^{t}(x,y)=q_{\|\cdot\|}(y,x)=\|y-x\|,
\]
and its symmetrisation is
\[
q_{\|\cdot\|}^{s}(x,y)
=
\max\{\|x-y\|,\|y-x\|\}.
\]

\begin{example}\label{ex:standard-affine}
Let \(C\) be a convex subset of an asymmetrically normed real vector space
\((X,\|\cdot\|)\).  Then
\[
S(x,y,\lambda)=\lambda x+(1-\lambda)y,
\qquad x,y\in C,\ \lambda\in[0,1],
\]
defines a synchronized TCS on \((C,q_{\|\cdot\|})\).  Indeed, for
\(x,y,z\in C\),
\[
q_{\|\cdot\|}\bigl(S(x,y,\lambda),z\bigr)
=
\|\lambda(x-z)+(1-\lambda)(y-z)\|
\le
\lambda\|x-z\|+(1-\lambda)\|y-z\|,
\]
and similarly,
\[
q_{\|\cdot\|}\bigl(z,S(x,y,\lambda)\bigr)
\le
\lambda q_{\|\cdot\|}(z,x)+(1-\lambda)q_{\|\cdot\|}(z,y).
\]
\end{example}

\begin{definition}\label{def:TIH}
Let \(W\) be a TCS on a real vector space \(X\).  We say that \(W\) is
\emph{translation-invariant} if
\[
W(x+z,y+z,\lambda)=W(x,y,\lambda)+z
\]
for all \(x,y,z\in X\) and all \(\lambda\in[0,1]\).  We say that \(W\) is
\emph{homogeneous} if
\[
W(\alpha x,\alpha y,\lambda)=\alpha W(x,y,\lambda)
\]
for all \(x,y\in X\), all \(\lambda\in[0,1]\), and all
\(\alpha\in\mathbb R\).
\end{definition}

\subsection{The Isbell-convex hull}
\label{subsec:isbell}

Let \((X,q)\) be a \(T_{0}\)-quasi-metric space.  A pair
\[
f=(f_1,f_2),\qquad f_i:X\to[0,\infty),
\]
is called an \emph{ample pair} if
\[
q(x,y)\le f_2(x)+f_1(y)
\qquad (x,y\in X).
\]
An ample pair \(f\) is called \emph{minimal} if whenever \(g=(g_1,g_2)\) is
ample and
\[
g_1\le f_1,\qquad g_2\le f_2
\]
pointwise, then \(g=f\).

\begin{definition}\label{def:IsbellHull}
The \emph{Isbell-convex hull} of \((X,q)\) is the set
\[
\mathcal E(X,q)
=
\{\,f=(f_1,f_2): f \text{ is a minimal ample pair on }(X,q)\,\}.
\]
For an asymmetrically normed real vector space \((X,\|\cdot\|)\), we write
\[
\E(X,\|\cdot\|)
:=
\mathcal E\bigl(X,q_{\|\cdot\|}\bigr).
\]
\end{definition}

For \(a\in X\), define
\[
f_a=\bigl((f_a)_1,(f_a)_2\bigr)
\]
by
\[
(f_a)_1(x)=q_{\|\cdot\|}(a,x)=\|a-x\|,
\qquad
(f_a)_2(x)=q_{\|\cdot\|}(x,a)=\|x-a\|.
\]
The map
\[
i:X\longrightarrow \E(X,\|\cdot\|),
\qquad
i(a)=f_a,
\]
is called the \emph{canonical embedding}.

\begin{lemma}\label{lem:suprep}
Let \(f=(f_1,f_2)\in\E(X,\|\cdot\|)\).  Then for every \(x\in X\),
\[
f_1(x)
=
\sup_{y\in X}
\bigl(q_{\|\cdot\|}(y,x)\dot{-}f_2(y)\bigr)
=
\sup_{y\in X}
\bigl(\|y-x\|\dot{-}f_2(y)\bigr),
\]
and
\[
f_2(x)
=
\sup_{y\in X}
\bigl(q_{\|\cdot\|}(x,y)\dot{-}f_1(y)\bigr)
=
\sup_{y\in X}
\bigl(\|x-y\|\dot{-}f_1(y)\bigr).
\]
\end{lemma}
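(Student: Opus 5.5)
We prove the first formula; the second follows by the symmetric argument with the roles of \(f_1,f_2\) and of \(q_{\|\cdot\|}, q_{\|\cdot\|}^{t}\) interchanged. Fix \(f\in\E(X,\|\cdot\|)\) and write
\[
g_1(x):=\sup_{y\in X}\bigl(\|y-x\|\dot{-}f_2(y)\bigr).
\]
The plan has two halves: first show \(g_1\le f_1\) using ampleness, then show \((g_1,f_2)\) is itself ample and invoke minimality.

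For the inequality \(g_1\le f_1\), use the ample condition \(q_{\|\cdot\|}(y,x)\le f_2(y)+f_1(x)\). It gives \(\|y-x\|-f_2(y)\le f_1(x)\) for every \(y\). Since also \(0\le f_1(x)\), we get \(\|y-x\|\dot{-}f_2(y)\le f_1(x)\). Taking the supremum over \(y\) yields \(g_1(x)\le f_1(x)\). In particular \(g_1\) is finite, nonnegative and \([0,\infty)\)-valued, which is needed for \((g_1,f_2)\) to be an admissible pair at all.

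Next we check that \((g_1,f_2)\) is ample. For any \(y,x\in X\), the definition of the supremum gives \(g_1(x)\ge \|y-x\|-f_2(y)\), that is,
\[
q_{\|\cdot\|}(y,x)\le f_2(y)+g_1(x).
\]
This is exactly the ample inequality \(q(y,x)\le f_2(y)+g_1(x)\) for the pair \((g_1,f_2)\).

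To conclude, note that \((g_1,f_2)\) is ample with \(g_1\le f_1\) and \(f_2\le f_2\) pointwise. Minimality of \(f\) therefore forces \((g_1,f_2)=f\), so \(f_1=g_1\), which is the first formula. For the second formula, define \(g_2(x):=\sup_{y}(\|x-y\|\dot{-}f_1(y))\). The ample inequality \(\|x-y\|\le f_2(x)+f_1(y)\) gives \(g_2\le f_2\). The pair \((f_1,g_2)\) is ample by construction, so minimality again gives \(g_2=f_2\).

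No step is a genuine obstacle. The only points needing care are ensuring the \(\dot{-}\) truncation keeps \(g_i\) nonnegative, so the new pair has values in \([0,\infty)\), and applying the ample inequality with the variables in the correct order for the directed quasi-metric.
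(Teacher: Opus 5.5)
Your argument is correct: ampleness gives $g_1\le f_1$ and $g_2\le f_2$, the pairs $(g_1,f_2)$ and $(f_1,g_2)$ are ample directly because $a\dot{-}b\ge a-b$, and minimality then forces equality. The paper gives no proof of this lemma; it recalls the result from the Isbell-hull construction of Kemajou, K\"unzi and Olela-Otafudu, and your comparison-with-minimality argument is the standard proof of this extremality property of minimal ample pairs.
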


\begin{remark}\label{rem:IsbellD}
On \(\E(X,\|\cdot\|)\), the canonical Isbell-hull di-metric is
\[
D(f,g)
=
\sup_{x\in X}\bigl(f_1(x)\dot{-}g_1(x)\bigr)
\vee
\sup_{x\in X}\bigl(g_2(x)\dot{-}f_2(x)\bigr).
\]
For minimal pairs, this reduces to
\[
D(f,g)
=
\sup_{x\in X}\bigl(f_1(x)\dot{-}g_1(x)\bigr)
=
\sup_{x\in X}\bigl(g_2(x)\dot{-}f_2(x)\bigr).
\]
In Section~\ref{sec:hullconvex}, this di-metric will be denoted by \(q_{\E}\).
\end{remark}

\subsection{Algebraic operations on the Isbell-convex hull}
\label{subsec:ops}

For an asymmetrically normed real vector space \((X,\|\cdot\|)\), the
Isbell-convex hull \(\E(X,\|\cdot\|)\) carries canonical scalar multiplication
and addition operations.  These operations make the hull into a real vector
space and make the canonical embedding \(i:X\to\E(X,\|\cdot\|)\) linear.

\begin{definition}\label{def:scalarHull}
Let \(f=(f_1,f_2)\in\E(X,\|\cdot\|)\) and let \(\alpha\in\mathbb R\).  Define
\(\alpha f=((\alpha f)_1,(\alpha f)_2)\) as follows.

If \(\alpha>0\), then
\[
(\alpha f)_1(x)=\alpha f_1(\alpha^{-1}x),
\qquad
(\alpha f)_2(x)=\alpha f_2(\alpha^{-1}x).
\]
If \(\alpha=0\), then
\[
(0f)_1(x)=\|0-x\|=\|-x\|,
\qquad
(0f)_2(x)=\|x-0\|=\|x\|.
\]
If \(\alpha<0\), then
\[
(\alpha f)_1(x)=|\alpha| f_2(\alpha^{-1}x),
\qquad
(\alpha f)_2(x)=|\alpha| f_1(\alpha^{-1}x).
\]
\end{definition}

\begin{definition}\label{def:oplusHull}
Let \(f=(f_1,f_2),g=(g_1,g_2)\in\E(X,\|\cdot\|)\).  Define
\[
f\oplus g=\bigl((f\oplus g)_1,(f\oplus g)_2\bigr)
\]
by
\[
(f\oplus g)_1(x)
=
\sup_{s\in X}
\bigl(f_1(x-s)\dot{-}g_2(s)\bigr),
\]
and
\[
(f\oplus g)_2(x)
=
\sup_{s\in X}
\bigl(f_2(x-s)\dot{-}g_1(s)\bigr),
\]
for every \(x\in X\).
\end{definition}

\begin{theorem}\label{thm:Evectorspace}
With the scalar multiplication of Definition~\ref{def:scalarHull} and the
addition of Definition~\ref{def:oplusHull}, the Isbell-convex hull
\(\E(X,\|\cdot\|)\) is a real vector space.  Moreover,
\[
i:X\longrightarrow \E(X,\|\cdot\|),
\qquad
i(a)=f_a,
\]
is a linear embedding.
\end{theorem}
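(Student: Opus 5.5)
The plan is to check directly the three things the statement requires. First, the operations of Definitions~\ref{def:scalarHull} and~\ref{def:oplusHull} send minimal pairs to minimal pairs. Second, they satisfy the vector-space axioms. Third, \(i\) is additive, homogeneous and injective. The device that makes this manageable is to use Lemma~\ref{lem:suprep} throughout. A minimal pair is determined by either coordinate, since \(f_2\) is the "conjugate" \(f_1^{\ast}(x)=\sup_y(\|x-y\|\dot-f_1(y))\) and \(f_1=f_2^{\ast}\). So identities between hull elements only need to be verified on one coordinate, once both sides are known to be minimal.

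\textbf{Step 1 (scalar multiplication).} For \(\alpha>0\), the change of variables \(x\mapsto\alpha^{-1}x\) and positive homogeneity of \(\|\cdot\|\) turn ampleness and the representation of Lemma~\ref{lem:suprep} for \(f\) into the same statements for \(\alpha f\). Hence \(\alpha f\) is minimal. For \(\alpha<0\), the coordinates are swapped and the argument is \(\alpha^{-1}x\) with \(|\alpha|\). Ampleness then follows from
\[
\|x-y\|=|\alpha|\,\|\alpha^{-1}y-\alpha^{-1}x\|\le |\alpha|\bigl(f_2(\alpha^{-1}y)+f_1(\alpha^{-1}x)\bigr),
\]
and minimality is transported the same way. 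For \(\alpha=0\), we have \(0f=f_0=i(0)\), which is minimal since each \(f_a\) is a minimal pair: \(f_a\) is ample by the triangle inequality, and minimal because any smaller ample \(g\) satisfies \(g_2(a)=g_1(a)=0\), and then ampleness at \(a\) forces \(g\ge f_a\).

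\textbf{Step 2 (addition).} I would first show that \(f\oplus g\) is ample and then that it is minimal. For minimality, the idea is to prove \((f\oplus g)_2=((f\oplus g)_1)^{\ast}\) and conversely, using the sup-representations of \(f\) and \(g\) and interchanging suprema. I expect this to be the main obstacle. The truncated subtraction \(\dot-\) does not commute cleanly with nested suprema, and ampleness \(\|x-y\|\le (f\oplus g)_2(x)+(f\oplus g)_1(y)\) requires choosing the splitting \(s\) well. My first attempt would be to use Isbell-convexity (di-injectivity) of \(\E\) together with the identification \(f_1(x)=D(f_x,f)\) and \(f_2(x)=D(f,f_x)\). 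Under that identification, \(f\oplus g\) corresponds to a translation-type construction, so minimality follows from the tight-extension characterisation in \cite{KemajouKunziOlela2012}. Failing that, I would invoke the computation in \cite{ConradieKunziOlela2017} directly. Commutativity and associativity of \(\oplus\) then reduce, by substitution \(s\mapsto x-s\) and Fubini for suprema, to the corresponding identities on the first coordinate. The neutral element is \(f_0\): \((f\oplus f_0)_1(x)=\sup_s(f_1(x-s)\dot-\|s\|)=f_1(x)\), where "\(\le\)" comes from \(f_1\) being \(q\)-nonexpansive and "\(\ge\)" from taking \(s=0\). The inverse is \((-1)f\), for which \((f\oplus(-f))_1(x)=\sup_s(f_1(x-s)\dot- f_1(-s))\) should equal \(\|-x\|\) by ampleness and minimality. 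The distributive laws for \(\alpha>0\) follow by rescaling, and the sign cases are handled by the coordinate swap.

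\textbf{Step 3 (linearity of \(i\)).} This is a direct computation: \((f_a\oplus f_b)_1(x)=\sup_s(\|a-x+s\|\dot-\|s-b\|)=\|a+b-x\|\), with "\(\le\)" from the triangle inequality and equality at \(s=b\). So \(i(a+b)=i(a)\oplus i(b)\), and \(i(\alpha a)=\alpha i(a)\) follows from homogeneity (for \(\alpha<0\), via \(\|{-y}\|\)). Injectivity holds because \(D(f_a,f_b)=\|a-b\|\) and \(q_{\|\cdot\|}\) is \(T_0\).
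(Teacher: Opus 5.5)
\textbf{Verdict: there is a genuine gap at the step that carries the content.} The paper does not prove this theorem. It is quoted from Conradie--K\"unzi--Olela-Otafudu, so deferring the hard step to that paper is exactly what the authors do. Your Step~1, the minimality of \(f_a\), the neutral-element computation and Step~3 are correct. But you never show that \(f\oplus g\) is again a minimal ample pair. Every identity you then check ``on the first coordinate'' (commutativity, associativity, \(f\oplus f_0=f\), \(f\oplus(-f)=f_0\)) presupposes this.

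Your Isbell-convexity route is only a programme, and it starts from a reversed identification: with the paper's \(D\) one has \(f_1(x)=D(f,f_x)\) and \(f_2(x)=D(f_x,f)\). Commutativity is also not a substitution. The change \(s\mapsto x-s\) turns \(\sup_s(f_1(x-s)\dot{-}g_2(s))\) into \(\sup_s(f_1(s)\dot{-}g_2(x-s))\), which still involves \(f_1,g_2\) rather than \(g_1,f_2\).

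There is a second gap: \((\alpha+\beta)f=\alpha f\oplus\beta f\) does not follow by rescaling; only \(\alpha(f\oplus g)=\alpha f\oplus\alpha g\) does. For \(\alpha,\beta>0\), set \(\lambda=\alpha/(\alpha+\beta)\) and \(m=x/(\alpha+\beta)\). You need \(\sup\{\lambda f_1(u)\dot{-}(1-\lambda)f_2(v):\lambda u+(1-\lambda)v=m\}=f_1(m)\).
\begin{itemize}
\item ``\(\le\)'' combines \(f_1(u)\le f_1(m)+\|m-u\|\) with ampleness at \((v,m)\).
\item ``\(\ge\)'' takes \(v=y\) and \(u=(m-(1-\lambda)y)/\lambda\), uses ampleness at \((y,u)\), and then Lemma~\ref{lem:suprep}.
\end{itemize}
Mixed signs then reduce to this case through the group laws.

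The missing closure argument is elementary once you expand through Lemma~\ref{lem:suprep}. Doing so gives
\[
(f\oplus g)_1(x)=\sup_{w\in X}\bigl(\|w-x\|\dot{-}(f_2\square g_2)(w)\bigr),\qquad
(f\oplus g)_2(x)=\sup_{w\in X}\bigl(\|x-w\|\dot{-}(f_1\square g_1)(w)\bigr),
\]
where \(\square\) is infimal convolution.
\begin{itemize}
\item These formulas are symmetric in \(f,g\). That symmetry, not a substitution, is what commutativity rests on; with Fubini it also gives associativity.
\item The pair \((f_1\square g_1,f_2\square g_2)\) is ample by the triangle inequality, so it dominates \(f\oplus g\).
\item For ampleness of \(f\oplus g\), use the symmetry to bound \((f\oplus g)_2(x)+(f\oplus g)_1(y)\) below by \(g_2(x-s)-f_1(s)+f_1(y-t)-g_2(t)\). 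With \(s=y-t\) and \(u=x-y\) this equals \(g_2(t+u)-g_2(t)\). Since \(g_2(nu)\ge n\|u\|-g_1(0)\), telescoping along \(t=ku\) gives \(\sup_t\bigl(g_2(t+u)-g_2(t)\bigr)\ge\|u\|\). This is the same device behind your inverse law.
\item Ampleness gives one inequality in each conjugacy relation of Lemma~\ref{lem:suprep} for \(f\oplus g\). Domination by the ample pair, together with order-reversal of conjugation and the displayed formulas, gives the other.
\item An ample pair satisfying both relations is minimal.
\end{itemize}
With this in place, the rest of your Step~2 goes through.
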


\begin{remark}\label{rem:additiveidentity}
The additive identity of \(\E(X,\|\cdot\|)\) is \(f_0\), where
\[
(f_0)_1(x)=\|0-x\|=\|-x\|,
\qquad
(f_0)_2(x)=\|x-0\|=\|x\|.
\]
The asymmetric norm on the hull is obtained from the Isbell-hull di-metric by
\[
\widetilde p(f):=D(f_0,f).
\]
This asymmetric norm is used in Section~\ref{sec:hullconvex} to prove that the
lifted barycentric map
\[
\mathbb W(f,g,\lambda)=\lambda f\oplus(1-\lambda)g
\]
is a Takahashi convexity structure on the hull.
\end{remark}

\section{The Isbell-hull di-metric and the lifted convexity structure}
\label{sec:hullconvex}

This section contains the main geometric construction of the paper.  We do
not introduce a new quasi-metric on the Isbell-convex hull.  Instead, we use
the canonical Isbell-hull di-metric of Kemajou, K\"unzi and Olela-Otafudu and
combine it with the vector-space operations on the hull constructed by
Conradie, K\"unzi and Olela-Otafudu.  This yields a natural barycentric map
\[
\mathbb W(f,g,\lambda)=\lambda f\oplus(1-\lambda)g
\]
on the hull.  We show that this map is a Takahashi convexity structure on
\((\E(X,\|\cdot\|),q_{\E})\).  Thus the Isbell-convex hull is not only an
injective enlargement of the original asymmetrically normed space, but also
carries an intrinsic convex \(T_{0}\)-quasi-metric geometry.

Throughout this section, let \((X,\|\cdot\|)\) be an asymmetrically normed real
vector space and let
\[
q_{\|\cdot\|}(x,y)=\|x-y\|,\qquad x,y\in X,
\]
be the associated \(T_{0}\)-quasi-metric.  We write
\[
\E(X,\|\cdot\|):=\mathcal E(X,q_{\|\cdot\|})
\]
for the Isbell-convex hull of the corresponding di-space.

\subsection{The canonical Isbell-hull di-metric}

Recall that an element of \(\E(X,\|\cdot\|)\) is a minimal ample pair
\(f=(f_1,f_2)\), where \(f_i:X\to[0,\infty)\).  For \(a\in X\), the canonical
embedded point is
\[
f_a=\bigl((f_a)_1,(f_a)_2\bigr),
\qquad
(f_a)_1(x)=q_{\|\cdot\|}(a,x)=\|a-x\|,
\quad
(f_a)_2(x)=q_{\|\cdot\|}(x,a)=\|x-a\|.
\]
We denote the canonical embedding by
\[
i:X\longrightarrow \E(X,\|\cdot\|),\qquad i(a)=f_a.
\]

\begin{definition}\label{def:qE}
Let \(f=(f_1,f_2),g=(g_1,g_2)\in \E(X,\|\cdot\|)\).  Define
\[
q_{\E}(f,g):=D(f,g),
\]
where \(D\) is the canonical Isbell-hull di-metric
\[
D(f,g)
=
\sup_{x\in X}\bigl(f_1(x)\dot{-}g_1(x)\bigr)
\vee
\sup_{x\in X}\bigl(g_2(x)\dot{-}f_2(x)\bigr),
\qquad
a\dot{-}b:=\max\{a-b,0\}.
\]
For minimal ample pairs, this reduces to
\[
q_{\E}(f,g)
=
\sup_{x\in X}\bigl(f_1(x)\dot{-}g_1(x)\bigr)
=
\sup_{x\in X}\bigl(g_2(x)\dot{-}f_2(x)\bigr).
\]
\end{definition}

\begin{remark}\label{rem:qEorientation}
The orientation in Definition~\ref{def:qE} is important.  The first-component
formula is
\[
q_{\E}(f,g)=\sup_{x\in X}\bigl(f_1(x)\dot{-}g_1(x)\bigr),
\]
not
\[
\sup_{x\in X}\bigl(g_1(x)\dot{-}f_1(x)\bigr).
\]
With this convention, the canonical embedding \(i:X\to\E(X,\|\cdot\|)\) is
isometric for the original quasi-metric \(q_{\|\cdot\|}\).
\end{remark}

\begin{theorem}\label{thm:qEisometric}
The map \(q_{\E}\) is a \(T_{0}\)-quasi-metric on \(\E(X,\|\cdot\|)\).
Moreover, for all \(a,b\in X\),
\[
q_{\E}(f_a,f_b)=q_{\|\cdot\|}(a,b)=\|a-b\|.
\]
Hence the canonical embedding
\[
i:X\to\E(X,\|\cdot\|),\qquad i(a)=f_a,
\]
is isometric.
\end{theorem}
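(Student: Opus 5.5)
The plan has four steps: basic quasi-metric properties of \(q_{\E}\), the \(T_{0}\) property, the agreement of the two supremum formulas for minimal pairs, and the isometry computation.

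\textbf{Step 1: \(q_{\E}\) is a quasi-pseudometric.} Clearly \(q_{\E}(f,g)\ge 0\) and \(q_{\E}(f,f)=0\). We also need finiteness. Minimality together with Lemma~\ref{lem:suprep} bounds \(f_1(x)-g_1(x)\) uniformly in \(x\). Concretely, using \(f_1(x)=\sup_y(\|y-x\|\dot- f_2(y))\) and ampleness of \(g\), one expects
\[
f_1(x)\le g_1(x)+\sup_y\bigl(g_2(y)\dot- f_2(y)\bigr).
\]
This is the standard Kemajou--K\"unzi--Olela-Otafudu bound, which shows that \(D(f,g)\) is finite on minimal pairs.

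For the triangle inequality, apply the pointwise inequality
\[
f_1(x)\dot- h_1(x)\le \bigl(f_1(x)\dot- g_1(x)\bigr)+\bigl(g_1(x)\dot- h_1(x)\bigr),
\]
and the analogous one for the second components. Taking suprema gives \(D(f,h)\le D(f,g)+D(g,h)\) for each of the two suprema in the definition, and hence for their maximum.

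\textbf{Step 2: the \(T_{0}\) property.} If \(q_{\E}(f,g)=0=q_{\E}(g,f)\), then \(f_1\le g_1\) and \(g_1\le f_1\), so \(f_1=g_1\). The second supremum gives \(f_2=g_2\) in the same way. Since elements of \(\E\) are pairs of functions, \(f=g\). Here I use the full max-form definition of \(D\), so the \(T_{0}\) property needs no reduction to a single supremum.

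\textbf{Step 3: the two suprema coincide for minimal pairs.} I will show that
\[
\sup_x\bigl(f_1\dot- g_1\bigr)=\sup_x\bigl(g_2\dot- f_2\bigr)
\]
for minimal pairs, justifying the reduced formula in Definition~\ref{def:qE}. I expect this to be the main technical obstacle. The plan is to use Lemma~\ref{lem:suprep} twice. Write \(f_1(x)=\sup_y(\|y-x\|\dot- f_2(y))\) and bound
\[
\|y-x\|\dot- f_2(y)\le \bigl(\|y-x\|\dot- g_2(y)\bigr)+\bigl(g_2(y)\dot- f_2(y)\bigr)\le g_1(x)+\sup_y\bigl(g_2\dot- f_2\bigr).
\]
This yields \(\sup(f_1\dot- g_1)\le\sup(g_2\dot- f_2)\). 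The reverse inequality follows symmetrically by writing \(g_2(x)=\sup_y(\|x-y\|\dot- g_1(y))\) and bounding it by \(f_2(x)+\sup(f_1\dot- g_1)\).

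\textbf{Step 4: isometry of the embedding.} For \(a,b\in X\), the reduced formula gives
\[
q_{\E}(f_a,f_b)=\sup_x\bigl(\|a-x\|\dot-\|b-x\|\bigr).
\]
The triangle inequality \(\|a-x\|\le\|a-b\|+\|b-x\|\) gives the upper bound \(\|a-b\|\). Choosing \(x=b\) gives the value \(\|a-b\|\), so equality holds. This uses that \(f_a\) is a minimal ample pair, which is standard and implicit in the definition of \(i\).

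Therefore \(i\) is isometric, and it is injective by the \(T_{0}\) property of \(q_{\|\cdot\|}\).
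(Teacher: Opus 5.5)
Your route differs from the paper's. The paper proves this theorem by citation only: it says the quasi-metric and \(T_0\) properties and the isometry are ``part of the Isbell-hull construction'' of Kemajou--K\"unzi--Olela-Otafudu. It likewise only asserts, in Remark~\ref{rem:IsbellD}, that the two suprema in \(D\) agree on minimal pairs. Your argument is a direct verification, and most of it is sound:
\begin{enumerate}[label=(\roman*),leftmargin=2.2em]
\item the triangle inequality via \((a-c)^+\le(a-b)^++(b-c)^+\);
\item the \(T_0\) property using the full max-form of \(D\);
\item Step 3, which correctly combines the sup-representation of \(f_1\) with ampleness of \(g\), and symmetrically that of \(g_2\) with ampleness of \(f\);
\item the witness \(x=b\) in Step 4.
\end{enumerate}
This proves exactly the facts the paper takes on trust, at the cost of length.

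There is a gap in Step 1. Your displayed inequality \(f_1(x)\le g_1(x)+\sup_y(g_2(y)\dot{-}f_2(y))\) only gives \(\sup_x(f_1\dot{-}g_1)\le\sup_y(g_2\dot{-}f_2)\); it is in fact half of your Step 3. It says nothing if the right-hand side is \(+\infty\), so it does not show that \(q_{\E}\) takes values in \([0,\infty)\), as Definition~\ref{def:qpm} requires.

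The missing idea is comparison with a fixed base point. Fix \(x_0\in X\). The sup-representation of \(f_1\), the triangle inequality \(\|y-x\|\le\|y-x_0\|+\|x_0-x\|\), and ampleness of \(f\) give \(f_1(x)\le f_1(x_0)+\|x_0-x\|\). Ampleness of \(g\) gives \(\|x_0-x\|\le g_2(x_0)+g_1(x)\). Hence
\[
f_1(x)-g_1(x)\le f_1(x_0)+g_2(x_0)\qquad(x\in X),
\]
so the first supremum is finite. The second is then finite by Step 3, or by the symmetric estimate \(g_2(x)-f_2(x)\le g_2(x_0)+f_1(x_0)\).

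Also add the one-line check that \(f_a\) is a minimal ample pair. Ampleness follows from the triangle inequality through \(a\). For minimality, any ample \(g\le f_a\) has \(g_1(a)=g_2(a)=0\), and ampleness at \(a\) then forces \(g\ge f_a\). With these two additions your proof is complete and self-contained.
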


\begin{proof}
The fact that \(q_{\E}\) is a \(T_{0}\)-quasi-metric is part of the
Isbell-hull construction for di-spaces.  The isometry of the canonical
embedding follows from the same construction.  Indeed, for \(a,b\in X\),
\[
q_{\E}(f_a,f_b)=D(f_a,f_b)=q_{\|\cdot\|}(a,b)=\|a-b\|.
\]
\end{proof}

\begin{remark}\label{rem:qEconjugate}
The conjugate quasi-metric is
\[
q_{\E}^{t}(f,g)=q_{\E}(g,f).
\]
Using the minimal-pair formula in Definition~\ref{def:qE}, we have
\[
q_{\E}^{t}(f,g)
=
\sup_{x\in X}\bigl(g_1(x)\dot{-}f_1(x)\bigr)
=
\sup_{x\in X}\bigl(f_2(x)\dot{-}g_2(x)\bigr).
\]
Thus the two components of a minimal ample pair encode the two directed
distances on the hull.
\end{remark}

\subsection{The vector-space structure of the hull}

The Isbell-convex hull \(\E(X,\|\cdot\|)\) carries canonical scalar
multiplication and addition operations.  We denote the addition by \(\oplus\).
With these operations, \(\E(X,\|\cdot\|)\) is a real vector space, and the
canonical embedding \(i:X\to\E(X,\|\cdot\|)\) is linear.

\begin{proposition}\label{prop:compat}
For all \(a,b\in X\) and all \(\alpha\in\mathbb R\),
\[
f_{a+b}=f_a\oplus f_b,
\qquad
f_{\alpha a}=\alpha f_a.
\]
Consequently, \(i:X\to\E(X,\|\cdot\|)\) is affine, and indeed linear, with
respect to the operations \((\oplus,\cdot)\) on the hull.
\end{proposition}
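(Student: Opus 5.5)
The plan is to verify both identities directly from Definitions~\ref{def:scalarHull} and~\ref{def:oplusHull}, computing each component pointwise in \(x\in X\). Linearity of \(i\) then follows at once: \(i(\alpha a+\beta b)=f_{\alpha a}\oplus f_{\beta b}=\alpha f_a\oplus\beta f_b\), and affineness is a special case. Theorem~\ref{thm:Evectorspace} already asserts that \(i\) is linear, so one could simply cite it. I would still record the explicit computation, since it is short and shows that the operations on the hull really restrict to those of \(X\).

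\textbf{Scalar multiplication.} I would split into three cases.

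For \(\alpha>0\), positive homogeneity of \(\|\cdot\|\) gives
\[
(\alpha f_a)_1(x)=\alpha\|a-\alpha^{-1}x\|=\|\alpha a-x\|=(f_{\alpha a})_1(x),
\]
and the second component is handled the same way.

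For \(\alpha=0\), Definition~\ref{def:scalarHull} gives \((0f_a)_1(x)=\|-x\|=\|0-x\|=(f_0)_1(x)\), and similarly for the second component.

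For \(\alpha<0\), write \(|\alpha|=-\alpha\). Then
\[
(\alpha f_a)_1(x)=|\alpha|\,(f_a)_2(\alpha^{-1}x)=|\alpha|\,\|\alpha^{-1}x-a\|=\bigl\|-\alpha\alpha^{-1}x+\alpha a\bigr\|=\|\alpha a-x\|.
\]
In the same way, \((\alpha f_a)_2(x)=|\alpha|\,\|a-\alpha^{-1}x\|=\|x-\alpha a\|\). The swap of components in the negative case is exactly what absorbs the sign, so only positive homogeneity is ever used.

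\textbf{Addition.} This is the main step. I need to show
\[
\sup_{s\in X}\bigl(\|a-x+s\|\dot{-}\|s-b\|\bigr)=\|a+b-x\|.
\]
For the upper bound, the triangle inequality gives \(\|a-x+s\|\le\|a+b-x\|+\|s-b\|\), so every term is at most \(\|a+b-x\|\). For the lower bound, the choice \(s=b\) attains this value.

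For the second component I would show
\[
\sup_{s\in X}\bigl(\|x-s-a\|\dot{-}\|b-s\|\bigr)=\|x-a-b\|.
\]
Here \(\|x-s-a\|\le\|x-a-b\|+\|b-s\|\) gives the upper bound, and \(s=b\) again attains it.

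The only potential obstacle is bookkeeping of orientation: one must check that the infimal-convolution-type formula pairs \(f_1\) with \(g_2\), and that the triangle inequality is applied in the correct order, since \(\|\cdot\|\) is not symmetric. Once the decompositions \((a-x+s)=(a+b-x)+(s-b)\) and \((x-s-a)=(x-a-b)+(b-s)\) are written down, both suprema are immediate.
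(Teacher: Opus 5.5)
Your proposal is correct, but it takes a different route from the paper. The paper computes nothing: its proof appeals to the construction of the hull operations (recorded as Theorem~\ref{thm:Evectorspace}, due to Conradie, K\"unzi and Olela-Otafudu). It simply asserts that \(\oplus\) and scalar multiplication were built so that \(x\mapsto f_x\) is linear.

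You instead verify both identities pointwise from Definitions~\ref{def:scalarHull} and~\ref{def:oplusHull}. Positive homogeneity handles all three scalar cases, with the component swap for \(\alpha<0\) absorbing the sign. For \(\oplus\), the decompositions \(a-x+s=(a+b-x)+(s-b)\) and \(x-s-a=(x-a-b)+(b-s)\) give the upper bound by subadditivity, and the supremum is attained at \(s=b\). I checked each of these computations, including the orientation of every norm term, and they are right.

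The paper's citation is shorter, but it silently assumes that the displayed definitions faithfully transcribe the original operations. Your argument is self-contained and uses only the two axioms of an asymmetric norm. It also confirms that the paper's orientation conventions are the correct ones on embedded points: pairing \(f_1\) with \(g_2\) (and \(f_2\) with \(g_1\)) in \(\oplus\), and swapping components for negative scalars.

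Neither argument shows that \((\E(X,\|\cdot\|),\oplus,\cdot)\) is a vector space. Both take that from Theorem~\ref{thm:Evectorspace}, which is all that is needed for the word ``linear'' in the conclusion to make sense.
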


\begin{proof}
This is precisely the compatibility of the canonical embedding with the
vector-space structure on \(\E(X,\|\cdot\|)\).  The operations \(\oplus\) and
scalar multiplication are constructed so that \(x\mapsto f_x\) is a linear
embedding of \(X\) into its Isbell-convex hull.
\end{proof}

Let \(f^0\) denote the additive identity of \(\E(X,\|\cdot\|)\).  Thus
\[
f^0=f_0,
\qquad
(f^0)_1(x)=\|0-x\|=\|-x\|,
\qquad
(f^0)_2(x)=\|x-0\|=\|x\|.
\]
The hull di-metric gives rise to an asymmetric norm on \(\E(X,\|\cdot\|)\) by
\[
\widetilde p(f):=q_{\E}(f^0,f),\qquad f\in\E(X,\|\cdot\|).
\]
Equivalently, the associated quasi-metric of the asymmetrically normed vector
space \((\E(X,\|\cdot\|),\widetilde p)\) is the Isbell-hull di-metric:
\[
q_{\E}(f,g)=\widetilde p(g-f),
\qquad f,g\in\E(X,\|\cdot\|).
\]

\begin{remark}\label{rem:qEfromptilde}
The identity
\[
q_{\E}(f,g)=\widetilde p(g-f)
\]
provides a connection between asymmetric normed linear geometry and Takahashi convexity.:
once the hull is viewed as an asymmetrically normed real vector space, the
usual affine convexity argument applies to \(\E(X,\|\cdot\|)\) itself.
\end{remark}

\subsection{The lifted barycentric map}

\begin{definition}\label{def:Wlift}
For \(f,g\in\E(X,\|\cdot\|)\) and \(\lambda\in[0,1]\), define
\[
\mathbb W(f,g,\lambda):=\lambda f\oplus(1-\lambda)g.
\]
We call \(\mathbb W\) the \emph{lifted Takahashi convexity map} on the
Isbell-convex hull.
\end{definition}

\begin{remark}\label{rem:WliftMotivation}
The formula for \(\mathbb W\) is the hull analogue of the standard affine
convexity map
\[
S(x,y,\lambda)=\lambda x+(1-\lambda)y
\]
on an asymmetrically normed real vector space.  The difference is that the
addition and scalar multiplication are now the canonical operations on the
Isbell-convex hull.
\end{remark}

\subsection{The main hull convexity theorem}

\begin{theorem}\label{thm:HullConvex}
Let \((X,\|\cdot\|)\) be an asymmetrically normed real vector space.  Then
\[
(\E(X,\|\cdot\|),q_{\E},\mathbb W)
\]
is a convex \(T_{0}\)-quasi-metric space in the sense of Takahashi--K\"unzi--Yildiz.
That is, for all \(f,g,h\in\E(X,\|\cdot\|)\) and all \(\lambda\in[0,1]\),
\[
q_{\E}\bigl(h,\mathbb W(f,g,\lambda)\bigr)
\le
\lambda q_{\E}(h,f)+(1-\lambda)q_{\E}(h,g),
\]
and
\[
q_{\E}\bigl(\mathbb W(f,g,\lambda),h\bigr)
\le
\lambda q_{\E}(f,h)+(1-\lambda)q_{\E}(g,h).
\]
\end{theorem}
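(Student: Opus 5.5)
The plan is to reduce the statement to the standard affine argument of Example~\ref{ex:standard-affine}, now carried out inside the asymmetrically normed vector space \((\E(X,\|\cdot\|),\widetilde p)\). By Theorem~\ref{thm:Evectorspace} the hull is a real vector space under \(\oplus\) and scalar multiplication. By the identity recorded after Proposition~\ref{prop:compat} (see Remark~\ref{rem:qEfromptilde}), \(q_{\E}(f,g)=\widetilde p(g-f)\), where \(\widetilde p(f)=q_{\E}(f^0,f)\) and \(g-f\) means \(g\oplus(-1)f\). I will first check that \(\widetilde p\) is an asymmetric seminorm. Two properties are needed.

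First, \emph{positive homogeneity}: \(\widetilde p(\alpha f)=\alpha\widetilde p(f)\) for \(\alpha\ge 0\). For \(\alpha>0\), Definition~\ref{def:scalarHull} gives \((\alpha f)_1(x)=\alpha f_1(\alpha^{-1}x)\), and \((f^0)_1(x)=\|-x\|=\alpha\|-\alpha^{-1}x\|\). Substituting \(x=\alpha y\) in the supremum formula of Definition~\ref{def:qE} gives
\[
\sup_x\bigl((f^0)_1(x)\dot{-}(\alpha f)_1(x)\bigr)=\alpha\sup_y\bigl((f^0)_1(y)\dot{-}f_1(y)\bigr).
\]
The case \(\alpha=0\) is trivial, since \(0f=f^0\).

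Second, \emph{subadditivity}: \(\widetilde p(f\oplus g)\le\widetilde p(f)+\widetilde p(g)\). I would obtain this from the triangle inequality for \(q_{\E}\) (Theorem~\ref{thm:qEisometric}) together with translation invariance of \(q_{\E}\), that is, \(q_{\E}(h\oplus k,h\oplus l)=q_{\E}(k,l)\). These give
\[
\widetilde p(f\oplus g)\le q_{\E}(f^0,f)+q_{\E}(f,f\oplus g)=\widetilde p(f)+q_{\E}(f^0,g).
\]
Translation invariance is exactly what the assertion \(q_{\E}(f,g)=\widetilde p(g-f)\) encodes, since \((h\oplus l)-(h\oplus k)=l-k\) in the vector space. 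So I will take it as given from the stated identity.

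With these in hand the two Takahashi inequalities follow from linear algebra in the hull. For the first, the vector space axioms give
\[
\mathbb W(f,g,\lambda)-h=\lambda(f-h)\oplus(1-\lambda)(g-h),
\]
which uses \(\lambda h\oplus(1-\lambda)h=h\). Hence
\[
q_{\E}(h,\mathbb W(f,g,\lambda))=\widetilde p\bigl(\lambda(f-h)\oplus(1-\lambda)(g-h)\bigr)\le\lambda\widetilde p(f-h)+(1-\lambda)\widetilde p(g-h),
\]
and the right-hand side equals \(\lambda q_{\E}(h,f)+(1-\lambda)q_{\E}(h,g)\). The second inequality is symmetric: write
\[
h-\mathbb W(f,g,\lambda)=\lambda(h-f)\oplus(1-\lambda)(h-g)
\]
and apply the same two properties. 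The endpoint cases \(\lambda\in\{0,1\}\) need no special treatment, because \(0f=f^0\) is the additive identity.

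The main obstacle is the justification of \(\widetilde p\) being positively homogeneous and, above all, of \(q_{\E}\) being translation invariant under \(\oplus\). This is where the concrete sup-formulas of Definition~\ref{def:oplusHull} interact with minimality, and it is not literally a formal consequence of the preceding text. If the cited identity \(q_{\E}(f,g)=\widetilde p(g-f)\) is accepted from the Conradie--K\"unzi--Olela-Otafudu construction, the proof is the routine computation above.

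Otherwise I would prove translation invariance directly. The plan is to show
\[
\sup_x\bigl((h\oplus k)_1(x)\dot{-}(h\oplus l)_1(x)\bigr)\le\sup_x\bigl(k_1(x)\dot{-}l_1(x)\bigr)
\]
by shifting the variable \(s\) in the defining supremum. Applying the same inequality with \(-h\) added recovers the reverse inequality, which gives equality.
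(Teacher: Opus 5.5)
Your proposal is correct and is essentially the paper's proof. Both treat \(\E(X,\|\cdot\|)\) as an asymmetrically normed space with \(q_{\E}(u,v)=\widetilde p(v-u)\), rewrite \(\mathbb W(f,g,\lambda)-h\) and \(h-\mathbb W(f,g,\lambda)\) as \(\lambda\)-weighted combinations, and apply subadditivity and positive homogeneity of \(\widetilde p\). The only difference is that you derive these seminorm properties from the scalar formula, the triangle inequality and translation invariance, whereas the paper takes them from the cited construction. In your optional direct proof of translation invariance no change of variables is needed: with \(h\) in the first slot of \(\oplus\) the supremum involves \(k_2,l_2\), and the termwise bound \(a\dot{-}b\le(a\dot{-}c)+(c\dot{-}b)\) gives \((h\oplus k)_1(x)\le(h\oplus l)_1(x)+\sup_{s}\bigl(l_2(s)\dot{-}k_2(s)\bigr)=(h\oplus l)_1(x)+q_{\E}(k,l)\).
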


\begin{proof}
Since \(\E(X,\|\cdot\|)\) is an asymmetrically normed real vector space with
asymmetric norm \(\widetilde p\), and since
\[
q_{\E}(u,v)=\widetilde p(v-u),
\]
we compute, for \(f,g,h\in\E(X,\|\cdot\|)\),
\[
q_{\E}\bigl(h,\mathbb W(f,g,\lambda)\bigr)
=
\widetilde p\bigl(\mathbb W(f,g,\lambda)-h\bigr).
\]
Using the vector-space operations on the hull,
\[
\mathbb W(f,g,\lambda)-h
=
\lambda(f-h)+(1-\lambda)(g-h).
\]
By subadditivity and positive homogeneity of the asymmetric norm
\(\widetilde p\),
\[
\widetilde p\bigl(\lambda(f-h)+(1-\lambda)(g-h)\bigr)
\le
\lambda\widetilde p(f-h)+(1-\lambda)\widetilde p(g-h).
\]
Therefore
\[
q_{\E}\bigl(h,\mathbb W(f,g,\lambda)\bigr)
\le
\lambda q_{\E}(h,f)+(1-\lambda)q_{\E}(h,g).
\]

Similarly,
\[
q_{\E}\bigl(\mathbb W(f,g,\lambda),h\bigr)
=
\widetilde p\bigl(h-\mathbb W(f,g,\lambda)\bigr).
\]
But
\[
h-\mathbb W(f,g,\lambda)
=
\lambda(h-f)+(1-\lambda)(h-g),
\]
and hence
\[
q_{\E}\bigl(\mathbb W(f,g,\lambda),h\bigr)
\le
\lambda q_{\E}(f,h)+(1-\lambda)q_{\E}(g,h).
\]
Thus \(\mathbb W\) satisfies both defining inequalities of a Takahashi
convexity structure on the \(T_{0}\)-quasi-metric space
\((\E(X,\|\cdot\|),q_{\E})\).
\end{proof}

\begin{corollary}\label{cor:WliftSynchronized}
The lifted convexity map \(\mathbb W\) is synchronized:
\[
\mathbb W(f,g,\lambda)=\mathbb W(g,f,1-\lambda)
\]
for all \(f,g\in\E(X,\|\cdot\|)\) and all \(\lambda\in[0,1]\).
\end{corollary}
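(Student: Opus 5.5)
The claim is purely algebraic, so the plan is to prove it directly from the vector-space axioms on \(\E(X,\|\cdot\|)\) given by Theorem~\ref{thm:Evectorspace}, without touching the concrete supremum formulas of Definition~\ref{def:oplusHull}. Fix \(f,g\in\E(X,\|\cdot\|)\) and \(\lambda\in[0,1]\). By Definition~\ref{def:Wlift},
\[
\mathbb W(g,f,1-\lambda)=(1-\lambda)g\oplus\bigl(1-(1-\lambda)\bigr)f=(1-\lambda)g\oplus\lambda f .
\]
Commutativity of \(\oplus\), which is one of the vector-space axioms, then gives \((1-\lambda)g\oplus\lambda f=\lambda f\oplus(1-\lambda)g=\mathbb W(f,g,\lambda)\). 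No other step is needed. In particular, the convexity inequalities of Theorem~\ref{thm:HullConvex} are irrelevant here, because synchronization is a statement about the map itself, not about the quasi-metric.

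The only point requiring care is commutativity of \(\oplus\). If one did not want to rely on Theorem~\ref{thm:Evectorspace}, it would have to be checked directly from Definition~\ref{def:oplusHull}. For \(u=\lambda f\) and \(v=(1-\lambda)g\), the aim is to show
\[
\sup_{s}\bigl(u_1(x-s)\dot{-}v_2(s)\bigr)=\sup_{s}\bigl(v_1(x-s)\dot{-}u_2(s)\bigr).
\]
The substitution \(s\mapsto x-s\) turns the left side into \(\sup_s\bigl(u_1(s)\dot{-}v_2(x-s)\bigr)\), which is not literally the right side. So the naive formula is not manifestly symmetric, and equality must come from minimality of the pairs. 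One would use Lemma~\ref{lem:suprep} to write \(u_1(s)=\sup_y(\|y-s\|\dot{-}u_2(y))\) and the analogous expression for \(v_1\). Both sides should then reduce to the common symmetric expression
\[
\sup_{y,t}\bigl(\|y-x+t\|-u_2(y)-v_2(t)\bigr)^{+}.
\]
I expect this reduction to be the main obstacle; the care lies in handling the truncations \(\dot{-}\).

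Since the paper takes Theorem~\ref{thm:Evectorspace} as given, I would present the short algebraic argument and remark that commutativity of \(\oplus\) is exactly the ingredient being invoked. The degenerate case also needs a line. When \(\lambda=0\) or \(\lambda=1\), one scalar coefficient is \(0\), and Definition~\ref{def:scalarHull} gives \(0f=f_0\), the additive identity of Remark~\ref{rem:additiveidentity}. Both sides then equal \(g\) (respectively \(f\)), consistent with the general argument.
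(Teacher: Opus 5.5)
Your proposal is correct and is essentially the paper's proof: both rewrite \(\mathbb W(g,f,1-\lambda)\) as \((1-\lambda)g\oplus\lambda f\) and conclude by commutativity of \(\oplus\), which comes from the vector-space structure of Theorem~\ref{thm:Evectorspace}. Your optional direct check of commutativity via Lemma~\ref{lem:suprep} also goes through, with less difficulty than you anticipate: the identity \(\bigl(a^{+}-c\bigr)^{+}=(a-c)^{+}\) for \(c\ge 0\) reduces both sides to the symmetric double supremum you wrote down.
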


\begin{proof}
Since \(\oplus\) is the vector-space addition on \(\E(X,\|\cdot\|)\), it is
commutative.  Therefore
\[
\mathbb W(g,f,1-\lambda)
=
(1-\lambda)g\oplus \lambda f
=
\lambda f\oplus(1-\lambda)g
=
\mathbb W(f,g,\lambda).
\]
\end{proof}

\begin{proposition}\label{prop:WliftS}
The lifted convexity structure \(\mathbb W\) has property \({\rm(S)}\).  That
is, for all \(f,g,f',g'\in\E(X,\|\cdot\|)\) and all \(\lambda\in[0,1]\),
\[
q_{\E}\bigl(\mathbb W(f,g,\lambda),\mathbb W(f',g',\lambda)\bigr)
\le
\lambda q_{\E}(f,f')+(1-\lambda)q_{\E}(g,g').
\]
\end{proposition}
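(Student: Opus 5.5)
We argue exactly as in the proof of Theorem~\ref{thm:HullConvex}, using the identity
\[
q_{\E}(u,v)=\widetilde p(v-u),\qquad u,v\in\E(X,\|\cdot\|),
\]
where \(\widetilde p\) is the asymmetric norm on the hull introduced before Remark~\ref{rem:qEfromptilde}. The point is that \(\E(X,\|\cdot\|)\) is a real vector space by Theorem~\ref{thm:Evectorspace}, and \(\widetilde p\) is subadditive and positively homogeneous. Property \({\rm(S)}\) then reduces to the usual affine estimate in an asymmetrically normed space.

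First we write the left-hand side as
\[
q_{\E}\bigl(\mathbb W(f,g,\lambda),\mathbb W(f',g',\lambda)\bigr)
=\widetilde p\bigl((\lambda f'\oplus(1-\lambda)g')-(\lambda f\oplus(1-\lambda)g)\bigr).
\]
Next we use the vector-space axioms in \(\E(X,\|\cdot\|)\): commutativity and associativity of \(\oplus\), and distributivity of scalar multiplication over \(\oplus\) and over real subtraction. These let us regroup the difference as
\[
\lambda(f'-f)\oplus(1-\lambda)(g'-g).
\]
Since \(\lambda\ge 0\) and \(1-\lambda\ge 0\), subadditivity and positive homogeneity of \(\widetilde p\) give
\[
\widetilde p\bigl(\lambda(f'-f)\oplus(1-\lambda)(g'-g)\bigr)
\le \lambda\,\widetilde p(f'-f)+(1-\lambda)\,\widetilde p(g'-g).
\]
Finally we convert back using \(\widetilde p(f'-f)=q_{\E}(f,f')\) and \(\widetilde p(g'-g)=q_{\E}(g,g')\), which is the claimed inequality.

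The only delicate point is the orientation of the identity \(q_{\E}(u,v)=\widetilde p(v-u)\). It requires the difference \(f'-f\), not \(f-f'\), so that the result matches \(q_{\E}(f,f')\) and not its conjugate. We will state this orientation explicitly so the argument mirrors the proof of Theorem~\ref{thm:HullConvex}.

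Apart from this, the argument depends only on the hull being an honest vector space with \(\widetilde p\) an asymmetric norm inducing \(q_{\E}\), as recorded before Remark~\ref{rem:qEfromptilde}. The endpoint cases \(\lambda=0,1\) need no separate treatment. When \(\lambda=0\), the scalar multiple \(0\cdot(f'-f)\) is the additive identity \(f^0\), and \(\widetilde p(f^0)=q_{\E}(f^0,f^0)=0\), so the estimate holds trivially there as well.
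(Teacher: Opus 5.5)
Your proposal is correct and follows the paper's proof essentially verbatim. Both rewrite the left-hand side via \(q_{\E}(u,v)=\widetilde p(v-u)\), regroup \(\mathbb W(f',g',\lambda)-\mathbb W(f,g,\lambda)\) as \(\lambda(f'-f)\oplus(1-\lambda)(g'-g)\) using the vector-space structure of the hull, and conclude by subadditivity and positive homogeneity of \(\widetilde p\). Your added remarks on orientation and on the endpoint cases are harmless but not needed.
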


\begin{proof}
Using \(q_{\E}(u,v)=\widetilde p(v-u)\), we have
\[
q_{\E}\bigl(\mathbb W(f,g,\lambda),\mathbb W(f',g',\lambda)\bigr)
=
\widetilde p\bigl(\mathbb W(f',g',\lambda)-\mathbb W(f,g,\lambda)\bigr).
\]
By linearity of the hull operations,
\[
\mathbb W(f',g',\lambda)-\mathbb W(f,g,\lambda)
=
\lambda(f'-f)+(1-\lambda)(g'-g).
\]
Hence, by subadditivity and positive homogeneity of \(\widetilde p\),
\[
\widetilde p\bigl(\lambda(f'-f)+(1-\lambda)(g'-g)\bigr)
\le
\lambda\widetilde p(f'-f)+(1-\lambda)\widetilde p(g'-g).
\]
Thus
\[
q_{\E}\bigl(\mathbb W(f,g,\lambda),\mathbb W(f',g',\lambda)\bigr)
\le
\lambda q_{\E}(f,f')+(1-\lambda)q_{\E}(g,g').
\]
\end{proof}

\begin{remark}\label{rem:HullMetricCase}
When the asymmetric norm on \(X\) is symmetric, the symmetrisation
\(q_{\E}^{s}\) is a genuine metric on the hull.  In that case the preceding
construction reduces to the usual Takahashi convexity structure on a convex
metric space, but the present formulation retains the directed information
coming from the asymmetric norm.
\end{remark}

\subsection{Compatibility with the base space}

We now show that the lifted convexity on the hull extends the standard affine
convexity on the original space.

\begin{theorem}\label{thm:Intertwine}
Let \(S\) be the standard affine convexity structure on \(X\), namely
\[
S(x,y,\lambda)=\lambda x+(1-\lambda)y,
\qquad x,y\in X,\ \lambda\in[0,1].
\]
Then for all \(x,y\in X\) and all \(\lambda\in[0,1]\),
\[
i\bigl(S(x,y,\lambda)\bigr)
=
\mathbb W\bigl(i(x),i(y),\lambda\bigr).
\]
Consequently, \(i(X)\) is a \(\mathbb W\)-convex subset of
\(\E(X,\|\cdot\|)\).
\end{theorem}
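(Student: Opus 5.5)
The identity should follow directly from the linearity of the canonical embedding, Proposition~\ref{prop:compat}. The plan is to unfold both sides and match them.

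First I would expand the right-hand side. By Definition~\ref{def:Wlift},
\[
\mathbb W\bigl(i(x),i(y),\lambda\bigr)=\lambda f_x\oplus(1-\lambda)f_y .
\]
Next I would apply the scalar part of Proposition~\ref{prop:compat} to each summand, giving \(\lambda f_x=f_{\lambda x}\) and \((1-\lambda)f_y=f_{(1-\lambda)y}\). Since \(\lambda,1-\lambda\in[0,1]\), only the cases \(\alpha>0\) and \(\alpha=0\) of Definition~\ref{def:scalarHull} are needed. For \(\lambda=0\) or \(\lambda=1\), one summand becomes \(0f=f_0=f^0\), which agrees with \(f_{0\cdot a}=f_0\).

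Then the additive part of Proposition~\ref{prop:compat} gives
\[
f_{\lambda x}\oplus f_{(1-\lambda)y}=f_{\lambda x+(1-\lambda)y}=i\bigl(S(x,y,\lambda)\bigr),
\]
which is the claimed identity.

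For the consequence, take \(i(x),i(y)\in i(X)\) and \(\lambda\in[0,1]\). The identity writes \(\mathbb W(i(x),i(y),\lambda)\) as \(i\) applied to \(\lambda x+(1-\lambda)y\in X\), so it lies in \(i(X)\). By Definition~\ref{def:Wconvexset}, this means \(i(X)\) is \(\mathbb W\)-convex.

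The only delicate point is that Proposition~\ref{prop:compat} really must hold for the concrete operations of Definitions~\ref{def:scalarHull} and~\ref{def:oplusHull}, since the whole argument rests on it. If I wanted an independent check, the step I would verify by hand is \(f_a\oplus f_b=f_{a+b}\) in the first component, namely
\[
\sup_s\bigl(\|a-x+s\|\dot{-}\|s-b\|\bigr)=\|a+b-x\|.
\]
The inequality \(\le\) comes from the triangle inequality \(\|a-x+s\|\le\|a+b-x\|+\|s-b\|\), and equality is attained at \(s=b\). The second component is handled symmetrically.

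The positive scalar case is immediate: \(\alpha\|a-\alpha^{-1}x\|=\|\alpha a-x\|\).
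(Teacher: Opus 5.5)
Your proposal is correct and follows essentially the same route as the paper. You expand \(\mathbb W(i(x),i(y),\lambda)=\lambda f_x\oplus(1-\lambda)f_y\), use the scalar and additive parts of Proposition~\ref{prop:compat} to collapse it to \(f_{\lambda x+(1-\lambda)y}\), and read off closure of \(i(X)\) under \(\mathbb W\). Your hand verification of \(f_a\oplus f_b=f_{a+b}\) and \(\alpha f_a=f_{\alpha a}\) for \(\alpha\ge 0\) from Definitions~\ref{def:scalarHull} and~\ref{def:oplusHull} is correct and a worthwhile addition, since the paper only asserts Proposition~\ref{prop:compat} by appeal to the construction.
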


\begin{proof}
By Proposition~\ref{prop:compat} and Definition~\ref{def:Wlift},
\[
\mathbb W\bigl(i(x),i(y),\lambda\bigr)
=
\lambda f_x\oplus(1-\lambda)f_y.
\]
Since \(i\) is linear with respect to the hull operations,
\[
\lambda f_x\oplus(1-\lambda)f_y
=
f_{\lambda x}\oplus f_{(1-\lambda)y}
=
f_{\lambda x+(1-\lambda)y}.
\]
Therefore
\[
\mathbb W\bigl(i(x),i(y),\lambda\bigr)
=
f_{S(x,y,\lambda)}
=
i\bigl(S(x,y,\lambda)\bigr).
\]
Thus \(i(X)\) is closed under the lifted barycentric operation \(\mathbb W\),
and hence \(i(X)\) is \(\mathbb W\)-convex.
\end{proof}

\begin{remark}\label{rem:structural-upgrade}
The predecessor work shows that minimal pairs in the Isbell-convex hull are
\(W\)-convex as functions on the original space \(X\).  The present theorem
expresses a different and stronger structural statement: the hull itself
carries a Takahashi convexity structure, and the canonical embedding
intertwines the base affine convexity with the lifted hull convexity.
\end{remark}

\section{Functional and intrinsic convexity on the Isbell hull}
\label{sec:pairconvex}

Section~\ref{sec:hullconvex} constructed a Takahashi convexity structure
directly on the Isbell-convex hull \(\E(X,\|\cdot\|)\).  The present section
explains how this intrinsic hull convexity relates to the earlier functional
convexity of minimal pairs on the original space \(X\).

There are two levels of convexity involved.  The first is \emph{functional}:
an element \(f=(f_1,f_2)\in\E(X,\|\cdot\|)\) may be viewed as a pair of
real-valued functions on \(X\), and one may ask whether each component is
convex with respect to a given Takahashi convexity structure \(W\) on \(X\).
This is the point of view developed in \cite{OlelaZweni2023}.  The second is
\emph{intrinsic}: the hull \(\E(X,\|\cdot\|)\) itself carries the lifted
barycentric operation
\[
\mathbb W(f,g,\lambda)=\lambda f\oplus(1-\lambda)g,
\]
and hence becomes a convex \(T_0\)-quasi-metric space in its own right.

The aim of this section is to record the compatibility between these two
levels.  In particular, under the usual translation-invariance and homogeneity
assumptions on \(W\), the class of \(W\)-convex minimal pairs is stable under
the hull operations used to define \(\mathbb W\).

\subsection{\texorpdfstring{\(W\)}{W}-convex function pairs}

\begin{definition}\label{def:Wconvexpair}
Let \((X,\|\cdot\|)\) be an asymmetrically normed real vector space, and let
\[
q_{\|\cdot\|}(x,y)=\|x-y\|
\]
be the associated \(T_0\)-quasi-metric.  Suppose that
\((X,q_{\|\cdot\|},W)\) is a convex \(T_0\)-quasi-metric space.

A pair \(f=(f_1,f_2)\) of extended real-valued functions
\[
f_j:X\longrightarrow (-\infty,\infty],
\qquad j=1,2,
\]
is called \emph{\(W\)-convex} if, for all \(x,y\in X\) and all
\(\lambda\in[0,1]\),
\[
f_j\bigl(W(x,y,\lambda)\bigr)
\le
\lambda f_j(x)+(1-\lambda)f_j(y),
\qquad j=1,2.
\]
\end{definition}

\begin{remark}\label{rem:Wconvexpair-basic}
\begin{enumerate}[label=(\alph*),leftmargin=2.2em]
\item If \(q=q^t\) and \(f_1=f_2=f\), then
Definition~\ref{def:Wconvexpair} reduces to the usual convexity of a
real-valued function with respect to a Takahashi convexity structure.

\item If \(C\subseteq X\) is \(W\)-convex, then the restriction of a
\(W\)-convex pair to \(C\) remains \(W\)-convex with respect to the restricted
convexity structure.
\end{enumerate}
\end{remark}

\begin{example}\label{ex:distpair}
For each \(z\in X\), define the canonical distance pair
\[
f_z=\bigl((f_z)_1,(f_z)_2\bigr)
\]
by
\[
(f_z)_1(x)=\|z-x\|,
\qquad
(f_z)_2(x)=\|x-z\|.
\]
Then \(f_z\) is \(W\)-convex on \(X\).
\end{example}

\begin{proof}
Fix \(x,y,z\in X\) and \(\lambda\in[0,1]\).  Since \(W\) is a Takahashi
convexity structure on \((X,q_{\|\cdot\|})\), we have
\[
q_{\|\cdot\|}\bigl(z,W(x,y,\lambda)\bigr)
\le
\lambda q_{\|\cdot\|}(z,x)+(1-\lambda)q_{\|\cdot\|}(z,y).
\]
Using \(q_{\|\cdot\|}(a,b)=\|a-b\|\), this gives
\[
\|z-W(x,y,\lambda)\|
\le
\lambda\|z-x\|+(1-\lambda)\|z-y\|,
\]
so \((f_z)_1\) is \(W\)-convex.

Similarly, the second Takahashi inequality gives
\[
q_{\|\cdot\|}\bigl(W(x,y,\lambda),z\bigr)
\le
\lambda q_{\|\cdot\|}(x,z)+(1-\lambda)q_{\|\cdot\|}(y,z),
\]
that is,
\[
\|W(x,y,\lambda)-z\|
\le
\lambda\|x-z\|+(1-\lambda)\|y-z\|.
\]
Thus \((f_z)_2\) is \(W\)-convex.
\end{proof}

\subsection{Minimal pairs as \texorpdfstring{\(W\)}{W}-convex functions}

The following result is the functional convexity principle inherited from the
predecessor paper.  It says that, under translation invariance of \(W\), every
minimal pair in the Isbell-convex hull is convex when regarded as a pair of
functions on the original space \(X\).

\begin{theorem}\label{thm:minpairsWconvex}
Let \((X,\|\cdot\|,W)\) be a convex asymmetrically normed real vector space.
Assume that \(W\) is translation-invariant in the sense of
Definition~\ref{def:TIH}.  Then every
\[
f=(f_1,f_2)\in\E(X,\|\cdot\|)
\]
is \(W\)-convex on \(X\).
\end{theorem}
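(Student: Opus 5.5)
Using Lemma~\ref{lem:suprep}, each component of a minimal pair is a pointwise supremum:
\[
f_1(x)=\sup_{y\in X}\bigl(\|y-x\|\dot{-}f_2(y)\bigr),\qquad
f_2(x)=\sup_{y\in X}\bigl(\|x-y\|\dot{-}f_1(y)\bigr).
\]
A pointwise supremum of $W$-convex functions is $W$-convex, since the inequality $h_y(W(x,x',\lambda))\le\lambda h_y(x)+(1-\lambda)h_y(x')\le \lambda\sup h+(1-\lambda)\sup h$ passes to the supremum. It therefore suffices to show that, for each fixed $y$, the maps
\[
x\mapsto \|y-x\|\dot{-}f_2(y),\qquad x\mapsto \|x-y\|\dot{-}f_1(y)
\]
are $W$-convex on $X$. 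For this I will use two facts.

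\textbf{Step 1.} The maps $x\mapsto\|y-x\|=(f_y)_1(x)$ and $x\mapsto\|x-y\|=(f_y)_2(x)$ are $W$-convex by Example~\ref{ex:distpair}. That example uses only the two Takahashi inequalities with $z=y$, and not translation invariance.

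\textbf{Step 2.} If $\varphi$ is $W$-convex and $c\ge0$ is a constant, then $\varphi\dot{-}c=\max\{\varphi-c,0\}$ is $W$-convex. The function $\varphi-c$ is $W$-convex because $\lambda+(1-\lambda)=1$, the constant $0$ is trivially $W$-convex, and the maximum of two $W$-convex functions is $W$-convex by the supremum argument above.

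Combining the two steps with the supremum principle gives the $W$-convexity of $f_1$ and $f_2$. These values are finite, because $f$ is a pair of $[0,\infty)$-valued functions, so the convex combinations make sense without conventions about $\infty$.

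The main obstacle is to make sure the supremum representation of Lemma~\ref{lem:suprep} is used with the correct orientation, so that the inner functions really are the distance functions covered by Example~\ref{ex:distpair}. For $f_1$ the inner term is $\|y-x\|$, which is $x\mapsto q(y,x)$ and is governed by the first Takahashi inequality. For $f_2$ it is $\|x-y\|$, governed by the second. Both directions are needed, which is exactly why the two-sided K\"unzi--Yildiz definition is required.

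On this route translation invariance of $W$ is not actually needed. I would note this, and mention that the hypothesis matters only if one argues instead by translating $f$ to a normalised pair via the hull operations, as in \cite{OlelaZweni2023}. If a subtlety appears, for instance if one wants to avoid invoking Lemma~\ref{lem:suprep} and work from minimality directly, I would fall back on that approach. It would show that replacing $f_1$ by its $W$-convex envelope yields an ample pair below $f$, and translation invariance is used to check ampleness of the translated pairs. Minimality then forces equality.
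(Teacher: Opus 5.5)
Your argument is correct, and it takes a different and more direct route than the paper. The paper gives no self-contained proof. It defers to the predecessor paper and sketches a contradiction argument: a failure of the Jensen inequality for \(f_1\) or \(f_2\) would yield a strictly smaller ample pair, contradicting minimality, with translation invariance of \(W\) entering that construction.

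You instead read \(W\)-convexity directly off the representation in Lemma~\ref{lem:suprep}. For fixed \(y\), the maps \(x\mapsto\|y-x\|=q_{\|\cdot\|}(y,x)\) and \(x\mapsto\|x-y\|=q_{\|\cdot\|}(x,y)\) are \(W\)-convex by the first and second Takahashi inequalities with \(z=y\), exactly as in Example~\ref{ex:distpair}. Truncated shifts \(\varphi\dot{-}c\) and pointwise suprema preserve \(W\)-convexity, and your orientation check is right.

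This route is much shorter and uses only the defining inequalities of a TCS. It needs neither translation invariance nor any linear structure on \(X\). It therefore shows that every minimal pair is \(W\)-convex for an arbitrary TCS \(W\) on \((X,q_{\|\cdot\|})\), and indeed for the Isbell hull of any convex \(T_0\)-quasi-metric space, since the sup-representation holds for minimal pairs in every di-space. Your remark that the hypothesis of Definition~\ref{def:TIH} is redundant here is accurate.

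The paper's minimality-based route adds nothing for this statement beyond continuity with the earlier work. Your fallback via \(W\)-convex envelopes is unnecessary. It would also work without translation invariance, because the envelope of \(f_1\) still dominates the \(W\)-convex function \(\sup_y\bigl(\|y-\cdot\|\dot{-}f_2(y)\bigr)\), so the modified pair stays ample.
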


\begin{proof}
This is the minimal-pair convexity result proved in \cite{OlelaZweni2023}.
The proof uses the minimality of \(f\), the sup-representation formulas for
minimal ample pairs, and translation invariance of \(W\).  The essential point
is that a failure of the Jensen-type inequality for either component would
allow one to construct a strictly smaller ample pair, contradicting minimality.
\end{proof}

\begin{remark}\label{rem:functional-vs-intrinsic}
Theorem~\ref{thm:minpairsWconvex} is a functional statement: it concerns the
convexity of the components \(f_1\) and \(f_2\) as functions on \(X\).  By
contrast, Theorem~\ref{thm:HullConvex} is an intrinsic hull statement: it
asserts that the hull itself is a convex \(T_0\)-quasi-metric space under
\(\mathbb W\).  Thus the present paper upgrades the predecessor's functional
convexity principle to a genuine convex geometry on the Isbell-convex hull.
\end{remark}

\subsection{Stability under scalar multiplication}

We next record stability properties of \(W\)-convex minimal pairs under the
algebraic operations of the hull.  These properties explain why the lifted
barycentric operation \(\mathbb W\) is compatible with the functional
convexity inherited from \(X\).

\begin{proposition}\label{prop:scalarWconvex}
Let \((X,\|\cdot\|,W)\) be a convex asymmetrically normed real vector space.
Assume that \(W\) is homogeneous.  If \(f\in\E(X,\|\cdot\|)\) is
\(W\)-convex on \(X\), then \(\alpha f\) is \(W\)-convex on \(X\) for every
\(\alpha\in\mathbb R\).
\end{proposition}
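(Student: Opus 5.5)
We argue directly from Definition~\ref{def:scalarHull}, splitting into the three cases \(\alpha>0\), \(\alpha=0\) and \(\alpha<0\). The key identity we use repeatedly is homogeneity of \(W\) in the form
\[
W(\beta x,\beta y,\lambda)=\beta W(x,y,\lambda),\qquad \beta\in\mathbb R,
\]
applied with \(\beta=\alpha^{-1}\). In every case, each component of \(\alpha f\) is a nonnegative multiple of a component of \(f\) (or of a distance function) precomposed with the linear map \(x\mapsto\alpha^{-1}x\). Jensen-type inequalities transfer along such compositions precisely because homogeneity lets \(\alpha^{-1}\) be moved inside \(W\).

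\textbf{Case \(\alpha>0\).} Fix \(x,y\in X\) and \(\lambda\in[0,1]\). Homogeneity gives
\[
(\alpha f)_1\bigl(W(x,y,\lambda)\bigr)=\alpha f_1\bigl(\alpha^{-1}W(x,y,\lambda)\bigr)=\alpha f_1\bigl(W(\alpha^{-1}x,\alpha^{-1}y,\lambda)\bigr).
\]
By \(W\)-convexity of \(f_1\), the right-hand side is at most
\[
\alpha\bigl(\lambda f_1(\alpha^{-1}x)+(1-\lambda)f_1(\alpha^{-1}y)\bigr)=\lambda(\alpha f)_1(x)+(1-\lambda)(\alpha f)_1(y),
\]
using \(\alpha>0\) to preserve the inequality. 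The second component \((\alpha f)_2\) is handled identically.

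\textbf{Case \(\alpha<0\).} The same computation applies with \(|\alpha|>0\) as the outer factor. The only change is that the components swap: \((\alpha f)_1=|\alpha|f_2(\alpha^{-1}\cdot)\) and \((\alpha f)_2=|\alpha|f_1(\alpha^{-1}\cdot)\). Since \emph{both} \(f_1\) and \(f_2\) are \(W\)-convex by hypothesis, the swap causes no difficulty. Homogeneity with the negative scalar \(\alpha^{-1}\) is needed here, and Definition~\ref{def:TIH} allows all \(\alpha\in\mathbb R\).

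\textbf{Case \(\alpha=0\).} Here \(0f=f_0\), which is the canonical distance pair at \(z=0\). Its \(W\)-convexity is exactly Example~\ref{ex:distpair}, which requires only that \(W\) be a TCS. Note that \(0f\) does not depend on \(f\), so the hypothesis on \(f\) is not used in this case.

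No step is a serious obstacle. The one point needing care is the case \(\alpha<0\): one must check that the component swap in Definition~\ref{def:scalarHull} is compatible with the componentwise definition of \(W\)-convexity, and that homogeneity is invoked for a negative scalar. Definition~\ref{def:Wconvexpair} requires each component separately to satisfy the Jensen inequality, and Definition~\ref{def:TIH} allows negative scalars, so both conditions hold. Membership of \(\alpha f\) in \(\E(X,\|\cdot\|)\) is already guaranteed by Theorem~\ref{thm:Evectorspace}, so only the convexity inequalities need to be verified.
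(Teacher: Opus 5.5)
Your proposal is correct and follows essentially the same route as the paper. The paper also splits into the cases \(\alpha>0\), \(\alpha=0\) and \(\alpha<0\), uses homogeneity to move \(\alpha^{-1}\) inside \(W\), handles the component swap for \(\alpha<0\) via \(W\)-convexity of both \(f_1\) and \(f_2\), and treats \(0f=f_0\) through Example~\ref{ex:distpair}.
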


\begin{proof}
Let \(f=(f_1,f_2)\in\E(X,\|\cdot\|)\) be \(W\)-convex.

First suppose \(\alpha>0\).  For \(j=1,2\), the scalar operation on the hull
gives
\[
(\alpha f)_j(x)=\alpha f_j(\alpha^{-1}x).
\]
Using homogeneity of \(W\),
\[
\alpha^{-1}W(x,y,\lambda)
=
W(\alpha^{-1}x,\alpha^{-1}y,\lambda).
\]
Hence
\[
(\alpha f)_j\bigl(W(x,y,\lambda)\bigr)
=
\alpha f_j\bigl(W(\alpha^{-1}x,\alpha^{-1}y,\lambda)\bigr).
\]
Since \(f_j\) is \(W\)-convex,
\[
(\alpha f)_j\bigl(W(x,y,\lambda)\bigr)
\le
\alpha\Bigl[
\lambda f_j(\alpha^{-1}x)
+
(1-\lambda)f_j(\alpha^{-1}y)
\Bigr].
\]
Therefore
\[
(\alpha f)_j\bigl(W(x,y,\lambda)\bigr)
\le
\lambda(\alpha f)_j(x)+(1-\lambda)(\alpha f)_j(y).
\]

If \(\alpha=0\), then \(0f=f_0\), the additive identity of the hull.  Since
\(f_0\) is the canonical distance pair at \(0\), Example~\ref{ex:distpair}
implies that \(f_0\) is \(W\)-convex.

Finally, suppose \(\alpha<0\).  The scalar action swaps the two components:
\[
(\alpha f)_1(x)=|\alpha|f_2(\alpha^{-1}x),
\qquad
(\alpha f)_2(x)=|\alpha|f_1(\alpha^{-1}x).
\]
The same homogeneity argument used above, together with the \(W\)-convexity of
both \(f_1\) and \(f_2\), gives the desired inequalities for both components
of \(\alpha f\).
\end{proof}

\subsection{Stability under the hull addition}

\begin{proposition}\label{prop:oplusWconvex}
Let \((X,\|\cdot\|,W)\) be a convex asymmetrically normed real vector space.
Assume that \(W\) is translation-invariant.  If
\(f,g\in\E(X,\|\cdot\|)\) are \(W\)-convex on \(X\), then
\(f\oplus g\) is \(W\)-convex on \(X\).
\end{proposition}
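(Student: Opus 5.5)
The most economical route is to observe that \(f\oplus g\) is itself an element of \(\E(X,\|\cdot\|)\), by Theorem~\ref{thm:Evectorspace}. Since \(W\) is assumed translation-invariant, Theorem~\ref{thm:minpairsWconvex} then says that every minimal pair in the hull is \(W\)-convex. In particular \(f\oplus g\) is \(W\)-convex, and on this route the hypothesis that \(f\) and \(g\) are \(W\)-convex is not even needed. I would record this one-line argument first. I would then give a direct verification from Definition~\ref{def:oplusHull}, because it shows exactly where translation invariance enters and which hypotheses on \(f,g\) the argument actually uses.

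For the direct argument, fix \(s\in X\) and consider
\[
\varphi_s(x):=f_1(x-s)\dot{-}g_2(s)=\max\{f_1(x-s)-g_2(s),0\}.
\]
\emph{Step 1.} The map \(x\mapsto f_1(x-s)\) is \(W\)-convex. Translation invariance gives \(W(x,y,\lambda)-s=W(x-s,y-s,\lambda)\), and \(W\)-convexity of \(f_1\) then yields
\[
f_1\bigl(W(x,y,\lambda)-s\bigr)\le \lambda f_1(x-s)+(1-\lambda)f_1(y-s).
\]
\emph{Step 2.} Subtracting the constant \(g_2(s)\) preserves the Jensen-type inequality, because \(\lambda+(1-\lambda)=1\). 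Taking the maximum with the constant function \(0\) also preserves it, since a pointwise maximum of two \(W\)-convex functions is \(W\)-convex. Hence each \(\varphi_s\) is \(W\)-convex.

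\emph{Step 3.} The component \((f\oplus g)_1=\sup_{s\in X}\varphi_s\) is a pointwise supremum of \(W\)-convex functions. For every \(s\) we have
\[
\varphi_s(W(x,y,\lambda))\le \lambda\,(f\oplus g)_1(x)+(1-\lambda)\,(f\oplus g)_1(y),
\]
and taking the supremum over \(s\) gives the inequality for \((f\oplus g)_1\).

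\emph{Step 4.} The second component is handled identically, with \(f_2(x-s)\dot{-}g_1(s)\) in place of \(\varphi_s\). This step uses only the \(W\)-convexity of \(f_2\).

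The only point needing care is that the suprema are finite, so that the inequalities are between real numbers rather than formal expressions in \((-\infty,\infty]\). Finiteness follows because \(f\oplus g\in\E(X,\|\cdot\|)\) has real-valued components. In any case, Definition~\ref{def:Wconvexpair} already permits values in \((-\infty,\infty]\), so the argument would go through even without finiteness.

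I expect no genuine obstacle. The one place to be careful is Step 1, where the translation must be applied in the correct direction: \(W(x,y,\lambda)-s=W(x-s,y-s,\lambda)\), obtained from Definition~\ref{def:TIH} with \(z=-s\). The direct argument shows that only \(W\)-convexity of \(f\) is used; the role of \(g\) is merely to supply the constants \(g_2(s)\) and \(g_1(s)\).
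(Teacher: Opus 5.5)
Your proposal is correct. Your direct verification (Steps 1--4) is essentially the paper's proof. The paper writes both components as $(f\oplus g)_j(x)=\sup_{z}\bigl(f_j(x-z)\dot{-}g_{j'}(z)\bigr)$, uses $W(x,y,\lambda)-z=W(x-z,y-z,\lambda)$ and the $W$-convexity of $f_j$, and then combines your Steps~2--3 into the elementary inequality $\bigl(\lambda A+(1-\lambda)B-C\bigr)^+\le\lambda(A-C)^+ +(1-\lambda)(B-C)^+$, followed by splitting the supremum. Like you, it never uses the $W$-convexity of $g$.

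Your one-line route is genuinely different. It bypasses the formula for $\oplus$ and obtains the conclusion from closure of $\E(X,\|\cdot\|)$ under $\oplus$ (Theorem~\ref{thm:Evectorspace}) together with Theorem~\ref{thm:minpairsWconvex}. This is legitimate within the paper, and it shows that the proposition is formally a special case of that theorem. The cost is dependence on two imported results: one is stated without proof, and the other has only a sketched proof here.

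The direct route buys self-containment and generality. It needs neither $f\oplus g\in\E(X,\|\cdot\|)$ nor minimality, only translation invariance and the $W$-convexity of $f$. The one-line route can also be made self-contained. By Lemma~\ref{lem:suprep}, each component of a minimal pair is a supremum of functions $x\mapsto\|y-x\|\dot{-}c$ or $x\mapsto\|x-y\|\dot{-}c$. These are $W$-convex by the two Takahashi inequalities, as in Example~\ref{ex:distpair}. Hence every minimal pair is $W$-convex even without translation invariance, provided $f\oplus g\in\E(X,\|\cdot\|)$ is granted.
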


\begin{proof}
Let \(f=(f_1,f_2)\) and \(g=(g_1,g_2)\).  For \(j=1,2\), set
\[
j' = 3-j.
\]
Thus \(1'=2\) and \(2'=1\).  The hull addition may be written uniformly as
\[
(f\oplus g)_j(x)
=
\sup_{z\in X}
\bigl(f_j(x-z)\dot{-}g_{j'}(z)\bigr).
\]
Indeed, for \(j=1\) this says
\[
(f\oplus g)_1(x)
=
\sup_{z\in X}
\bigl(f_1(x-z)\dot{-}g_2(z)\bigr),
\]
while for \(j=2\) it says
\[
(f\oplus g)_2(x)
=
\sup_{z\in X}
\bigl(f_2(x-z)\dot{-}g_1(z)\bigr).
\]

Fix \(x,y\in X\) and \(\lambda\in[0,1]\).  Translation invariance gives
\[
W(x,y,\lambda)-z
=
W(x-z,y-z,\lambda)
\qquad (z\in X).
\]
Therefore
\[
(f\oplus g)_j\bigl(W(x,y,\lambda)\bigr)
=
\sup_{z\in X}
\bigl(
f_j(W(x,y,\lambda)-z)\dot{-}g_{j'}(z)
\bigr)
\]
\[
=
\sup_{z\in X}
\bigl(
f_j(W(x-z,y-z,\lambda))\dot{-}g_{j'}(z)
\bigr).
\]
Since \(f_j\) is \(W\)-convex,
\[
f_j(W(x-z,y-z,\lambda))
\le
\lambda f_j(x-z)+(1-\lambda)f_j(y-z).
\]
Hence
\[
(f\oplus g)_j\bigl(W(x,y,\lambda)\bigr)
\le
\sup_{z\in X}
\Bigl(
\lambda f_j(x-z)+(1-\lambda)f_j(y-z)\dot{-}g_{j'}(z)
\Bigr).
\]

Using the elementary inequality
\[
\bigl(\lambda A+(1-\lambda)B-C\bigr)^+
\le
\lambda(A-C)^+ + (1-\lambda)(B-C)^+
\]
for \(A,B,C\in\mathbb R\), we obtain
\[
(f\oplus g)_j\bigl(W(x,y,\lambda)\bigr)
\le
\lambda
\sup_{z\in X}\bigl(f_j(x-z)\dot{-}g_{j'}(z)\bigr)
+
(1-\lambda)
\sup_{z\in X}\bigl(f_j(y-z)\dot{-}g_{j'}(z)\bigr).
\]
Therefore
\[
(f\oplus g)_j\bigl(W(x,y,\lambda)\bigr)
\le
\lambda (f\oplus g)_j(x)+(1-\lambda)(f\oplus g)_j(y).
\]
Since this holds for \(j=1,2\), the pair \(f\oplus g\) is \(W\)-convex.
\end{proof}

\subsection{Affine stability and compatibility with the lifted structure}

Combining the preceding two propositions gives the stability of
\(W\)-convexity under the affine operations used to define the lifted hull
convexity.

\begin{corollary}\label{cor:affineWconvex}
Let \((X,\|\cdot\|,W)\) be a convex asymmetrically normed real vector space.
Assume that \(W\) is translation-invariant and homogeneous.  If
\(f,g\in\E(X,\|\cdot\|)\) are \(W\)-convex on \(X\), then
\[
\mathbb W(f,g,\lambda)
=
\lambda f\oplus(1-\lambda)g
\]
is \(W\)-convex on \(X\) for every \(\lambda\in[0,1]\).
\end{corollary}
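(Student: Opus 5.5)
The plan is simply to compose the two stability results just established. Fix \(f,g\in\E(X,\|\cdot\|)\) that are \(W\)-convex on \(X\) and fix \(\lambda\in[0,1]\). First, since \(W\) is homogeneous, Proposition~\ref{prop:scalarWconvex} applied with \(\alpha=\lambda\) shows that \(\lambda f\) is \(W\)-convex on \(X\). Applied with \(\alpha=1-\lambda\), it shows that \((1-\lambda)g\) is \(W\)-convex on \(X\). Both scalar multiples are again elements of \(\E(X,\|\cdot\|)\), because the hull is a real vector space by Theorem~\ref{thm:Evectorspace}. Hence they are legitimate inputs for the addition result.

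Second, since \(W\) is translation-invariant, Proposition~\ref{prop:oplusWconvex} applied to the pair \(\lambda f\) and \((1-\lambda)g\) shows that \(\lambda f\oplus(1-\lambda)g\) is \(W\)-convex. By Definition~\ref{def:Wlift}, this element is exactly \(\mathbb W(f,g,\lambda)\), which gives the claim.

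The only point needing care is the boundary values \(\lambda\in\{0,1\}\), where one coefficient vanishes. I would check that these are covered rather than treat them separately by hand. For \(\lambda=0\), Proposition~\ref{prop:scalarWconvex} in its \(\alpha=0\) case gives \(0f=f_0\), which is \(W\)-convex by Example~\ref{ex:distpair}. The sum \(f_0\oplus g\) is then handled by Proposition~\ref{prop:oplusWconvex} in the same way as above. One may also note that \(f_0\oplus g=g\), since \(f_0\) is the additive identity. The case \(\lambda=1\) is symmetric.

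Thus no step is a genuine obstacle. The argument is a two-line composition of the preceding propositions, and the hypotheses (homogeneity for the scalar step, translation invariance for the addition step) are used exactly where those propositions require them.
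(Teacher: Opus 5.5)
Your proposal is correct and is essentially the paper's own proof: Proposition~\ref{prop:scalarWconvex} (homogeneity) makes \(\lambda f\) and \((1-\lambda)g\) \(W\)-convex, and Proposition~\ref{prop:oplusWconvex} (translation invariance) then applies to their hull sum. Your added remarks, that the scalar multiples lie in \(\E(X,\|\cdot\|)\) and that the endpoints \(\lambda\in\{0,1\}\) are covered by the \(\alpha=0\) case, are accurate and make explicit what the paper leaves implicit.
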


\begin{proof}
By Proposition~\ref{prop:scalarWconvex}, the pairs \(\lambda f\) and
\((1-\lambda)g\) are \(W\)-convex.  By
Proposition~\ref{prop:oplusWconvex}, their hull sum
\[
\lambda f\oplus(1-\lambda)g
\]
is also \(W\)-convex.
\end{proof}

\begin{corollary}\label{cor:allminimalstable}
Assume that \(W\) is translation-invariant and homogeneous.  Then every
element of \(\E(X,\|\cdot\|)\) is \(W\)-convex on \(X\), and the class
\(\E(X,\|\cdot\|)\) is stable under the lifted barycentric operation
\[
(f,g,\lambda)\longmapsto \mathbb W(f,g,\lambda).
\]
\end{corollary}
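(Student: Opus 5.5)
The corollary has two parts, and each follows from a result already established, so the proof is essentially bookkeeping.

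For the first assertion I would invoke Theorem~\ref{thm:minpairsWconvex}. Its only hypothesis on \(W\) is translation invariance, which is assumed here. It yields directly that every \(f\in\E(X,\|\cdot\|)\) is \(W\)-convex on \(X\). Homogeneity is not needed at this stage.

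For the second assertion there are two separate facts to record. The first is \emph{closure in the hull}. For \(f,g\in\E(X,\|\cdot\|)\) and \(\lambda\in[0,1]\), the element \(\mathbb W(f,g,\lambda)=\lambda f\oplus(1-\lambda)g\) again lies in \(\E(X,\|\cdot\|)\). This holds because, by Theorem~\ref{thm:Evectorspace}, scalar multiplication and \(\oplus\) are operations on the hull, so the hull is closed under them; this is also implicit in Definition~\ref{def:Wlift}. The second fact is \emph{preservation of \(W\)-convexity}. Since \(f\) and \(g\) are \(W\)-convex by the first part, Corollary~\ref{cor:affineWconvex} applies, using both translation invariance and homogeneity, and shows that \(\mathbb W(f,g,\lambda)\) is \(W\)-convex. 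This is consistent with the first part applied directly to \(\mathbb W(f,g,\lambda)\in\E(X,\|\cdot\|)\), so either route settles the claim.

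There is no genuine obstacle. The only point needing care is the meaning of ``stable'': it means that \(\mathbb W\) maps \(\E\times\E\times[0,1]\) into \(\E\) and preserves the \(W\)-convex class. Since every element of the hull is \(W\)-convex by the first part, these two properties coincide.
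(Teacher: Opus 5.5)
Your proof is correct and follows the paper's own: Theorem~\ref{thm:minpairsWconvex} gives the first assertion and Corollary~\ref{cor:affineWconvex} gives stability, while your explicit appeal to Theorem~\ref{thm:Evectorspace} for closure of the hull is a harmless addition. Your side remark is also right: since every hull element is \(W\)-convex and \(\mathbb W\) maps into \(\E(X,\|\cdot\|)\), the second assertion already follows from the first, so homogeneity is not actually needed for this corollary.
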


\begin{proof}
By Theorem~\ref{thm:minpairsWconvex}, every element of
\(\E(X,\|\cdot\|)\) is \(W\)-convex on \(X\).  The stability under
\(\mathbb W\) follows from Corollary~\ref{cor:affineWconvex}.
\end{proof}

\begin{remark}\label{rem:two-level-compatibility}
Corollary~\ref{cor:allminimalstable} makes precise the compatibility between
the two levels of convexity considered in this paper.  On the one hand,
Theorem~\ref{thm:HullConvex} shows that \(\mathbb W\) is an intrinsic
Takahashi convexity structure on the hull.  On the other hand, the present
corollary shows that the same barycentric operation preserves the functional
\(W\)-convexity of minimal pairs on \(X\).  Thus the lifted hull geometry is
consistent with the earlier function-pair convexity theory.
\end{remark}

\section{Segments in the hull and uniqueness of the lifted structure}
\label{sec:segments}

In Section~\ref{sec:hullconvex} we showed that the Isbell-convex hull
\[
(\E(X,\|\cdot\|),q_{\E})
\]
carries the lifted Takahashi convexity structure
\[
\mathbb W(f,g,\lambda)=\lambda f\oplus(1-\lambda)g.
\]
We now record the corresponding segment theory.  The purpose of this section is
not to develop a new segment theorem from scratch, but to apply the general
theory of convexity structures in \(T_0\)-quasi-metric spaces to the specific
hull geometry constructed above.

The main point is that, whenever the lifted structure \(\mathbb W\) is unique,
the segment joining two hull elements is isometrically parametrised by a
standard directed interval.  Thus uniqueness turns the formal barycentric path
\[
\lambda\longmapsto \mathbb W(f,g,\lambda)
\]
into a genuine quasi-metric segment.

\subsection{Hull segments}

\begin{definition}\label{def:hullsegments}
Let \(f,g\in\E(X,\|\cdot\|)\).  The \emph{\(\mathbb W\)-segment} from \(f\)
to \(g\) is
\[
S[f,g]
:=
\{\mathbb W(f,g,\lambda):\lambda\in[0,1]\}.
\]
Equivalently,
\[
S[f,g]
=
\{\lambda f\oplus(1-\lambda)g:\lambda\in[0,1]\}.
\]
\end{definition}

\begin{remark}\label{rem:hullsegments-basic}
The segment \(S[f,g]\) is the image of the unit interval under the map
\[
[0,1]\longrightarrow \E(X,\|\cdot\|),
\qquad
\lambda\longmapsto \lambda f\oplus(1-\lambda)g.
\]
Since \((\E(X,\|\cdot\|),q_{\E},\mathbb W)\) is a convex
\(T_0\)-quasi-metric space by Theorem~\ref{thm:HullConvex}, every
\(\mathbb W\)-convex subset of the hull that contains \(f\) and \(g\) also
contains \(S[f,g]\).  If \(f=g\), then \(S[f,g]=\{f\}\).
\end{remark}

\subsection{Uniqueness of the lifted convexity structure}

\begin{definition}\label{def:uniqW}
We say that the lifted convexity structure \(\mathbb W\) is \emph{unique} on
\((\E(X,\|\cdot\|),q_{\E})\) if the following condition holds.

Whenever \(h\in\E(X,\|\cdot\|)\), \(f,g\in\E(X,\|\cdot\|)\), and
\(\lambda\in[0,1]\) satisfy
\[
q_{\E}(u,h)
\le
\lambda q_{\E}(u,f)+(1-\lambda)q_{\E}(u,g),
\]
and
\[
q_{\E}(h,u)
\le
\lambda q_{\E}(f,u)+(1-\lambda)q_{\E}(g,u)
\]
for every \(u\in\E(X,\|\cdot\|)\), then
\[
h=\mathbb W(f,g,\lambda).
\]
\end{definition}

\begin{remark}\label{rem:uniqW-context}
This is the uniqueness notion for Takahashi convexity structures in
\(T_0\)-quasi-metric spaces, specialised to the Isbell-convex hull.  The
condition is not automatic.  It is a genuine geometric restriction on the
directed hull geometry.  In this section, uniqueness is used only as a
hypothesis for the isometric parametrisation of hull segments.
\end{remark}

\subsection{Directed intervals}

For \(\alpha,\beta\in[0,\infty)\), with \(\alpha+\beta\neq0\), let
\(I_{\alpha,\beta}\) denote the unit interval \([0,1]\) equipped with the
\(T_0\)-quasi-metric
\[
d_{\alpha,\beta}(s,t)
=
\begin{cases}
(s-t)\alpha, & s\ge t,\\[2mm]
(t-s)\beta, & s<t.
\end{cases}
\]
This is the standard directed interval associated with the pair
\((\alpha,\beta)\).  The forward length is controlled by \(\alpha\), while the
reverse length is controlled by \(\beta\).

\subsection{Isometric parametrisation of hull segments}

\begin{theorem}\label{thm:segmentIsom}
Assume that the lifted convexity structure \(\mathbb W\) is unique on
\((\E(X,\|\cdot\|),q_{\E})\).  Let \(f,g\in\E(X,\|\cdot\|)\), \(f\neq g\), and
set
\[
\alpha:=q_{\E}(f,g),
\qquad
\beta:=q_{\E}(g,f).
\]
Then the map
\[
\gamma_{f,g}:I_{\alpha,\beta}\longrightarrow (\E(X,\|\cdot\|),q_{\E}),
\qquad
\gamma_{f,g}(\lambda)=\mathbb W(f,g,\lambda),
\]
is an isometric embedding.  Equivalently, for all
\(\lambda,\lambda'\in[0,1]\),
\[
q_{\E}\bigl(\mathbb W(f,g,\lambda),\mathbb W(f,g,\lambda')\bigr)
=
\begin{cases}
(\lambda-\lambda')q_{\E}(f,g), & \lambda\ge \lambda',\\[2mm]
(\lambda'-\lambda)q_{\E}(g,f), & \lambda<\lambda'.
\end{cases}
\]
\end{theorem}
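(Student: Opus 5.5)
Since the hull is a real vector space and \(q_{\E}(u,v)=\widetilde p(v-u)\) (Remark~\ref{rem:qEfromptilde}), the computation reduces to linear algebra, and the uniqueness hypothesis will not be needed. Write \(\gamma(\lambda)=\mathbb W(f,g,\lambda)=\lambda f\oplus(1-\lambda)g\).

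\textbf{Step 1: the difference of two points on the segment.} Using the vector-space operations of Theorem~\ref{thm:Evectorspace}, for \(\lambda,\lambda'\in[0,1]\) I compute
\[
\gamma(\lambda')-\gamma(\lambda)=(\lambda'-\lambda)f+(\lambda-\lambda')g=(\lambda-\lambda')(g-f).
\]

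\textbf{Step 2: evaluate the quasi-metric.} Hence
\[
q_{\E}\bigl(\gamma(\lambda),\gamma(\lambda')\bigr)=\widetilde p\bigl((\lambda-\lambda')(g-f)\bigr).
\]
I split into two cases.

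If \(\lambda\ge\lambda'\), positive homogeneity of \(\widetilde p\) gives
\[
(\lambda-\lambda')\,\widetilde p(g-f)=(\lambda-\lambda')\,q_{\E}(f,g)=(\lambda-\lambda')\alpha .
\]

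If \(\lambda<\lambda'\), I rewrite the vector as \((\lambda'-\lambda)(f-g)\). Positive homogeneity then gives
\[
(\lambda'-\lambda)\,\widetilde p(f-g)=(\lambda'-\lambda)\,q_{\E}(g,f)=(\lambda'-\lambda)\beta .
\]
This is exactly \(d_{\alpha,\beta}(\lambda,\lambda')\), so \(\gamma_{f,g}\) preserves distances.

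\textbf{Step 3: legitimacy of the directed interval and injectivity.} Since \(f\ne g\) and \(q_{\E}\) is \(T_0\) (Theorem~\ref{thm:qEisometric}), we have \(\alpha+\beta>0\). Thus \(I_{\alpha,\beta}\) is a genuine \(T_0\)-quasi-metric space. An isometry from a \(T_0\) space is automatically injective, so \(\gamma_{f,g}\) is an isometric embedding.

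\textbf{Main obstacle.} The only delicate point is the identity \(q_{\E}(u,v)=\widetilde p(v-u)\) together with \(\widetilde p\) being an asymmetric norm, in particular positively homogeneous. The paper asserts this in Section~\ref{sec:hullconvex}, and I would rely on it.

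Without that identity, the fallback is to exploit uniqueness. One shows \(\gamma(\lambda)\) lies ``between'' \(\gamma(\lambda')\) and the endpoints, that is, \(\gamma(\lambda)=\mathbb W(\gamma(\lambda'),g,\mu)\) for a suitable \(\mu\), and then uses the Takahashi inequalities to get the upper bounds. The reverse inequalities would come from the triangle inequality along \(f,\gamma(\lambda),\gamma(\lambda'),g\), exactly as in the Künzi–Yildiz segment theorem. I expect the linear route to suffice.
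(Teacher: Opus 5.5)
Your proof is correct, but it follows a different route from the paper's. The paper computes nothing. It applies the K\"unzi--Yildiz segment theorem for unique Takahashi convexity structures as a black box to $(\E(X,\|\cdot\|),q_{\E},\mathbb W)$, and that citation is the only place the uniqueness hypothesis enters.

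You instead use the identity $q_{\E}(u,v)=\widetilde p(v-u)$ and positive homogeneity of $\widetilde p$. These are the same ingredients the paper itself relies on for Theorem~\ref{thm:HullConvex} and Proposition~\ref{prop:WliftS}, so your argument stands on the same footing as the paper's main theorem. From them you get $\gamma(\lambda')-\gamma(\lambda)=(\lambda-\lambda')(g-f)$ and read off $d_{\alpha,\beta}(\lambda,\lambda')$ directly. Your sign bookkeeping in the two cases is right, and so is injectivity via $\alpha+\beta>0$ and the $T_0$ property of $d_{\alpha,\beta}$.

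Your route gives a stronger, self-contained result. The isometric parametrisation holds for every pair $f\neq g$ with no uniqueness assumption at all, so that hypothesis in the theorem is redundant. This is just the familiar fact that affine segments in an asymmetrically normed space are isometric copies of directed intervals. It also shows the paper's remark that uniqueness ``turns the formal barycentric path into a genuine quasi-metric segment'' overstates the role of uniqueness: the path is always such a segment.

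The paper's route buys generality. It uses no linear structure, so it would apply to any unique Takahashi convexity structure on any $T_0$-quasi-metric space. In the hull, where the linear structure is available, that generality is not needed. The paper's route also silently requires matching the orientation conventions of the cited theorem, which your computation checks explicitly. Your fallback paragraph is unnecessary.
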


\begin{proof}
By Theorem~\ref{thm:HullConvex}, the triple
\[
(\E(X,\|\cdot\|),q_{\E},\mathbb W)
\]
is a convex \(T_0\)-quasi-metric space.  Under the additional assumption that
\(\mathbb W\) is unique, the general segment theorem for unique convexity
structures in \(T_0\)-quasi-metric spaces applies.  It states that, for any
two distinct points \(a,b\) in a \(T_0\)-quasi-metric space with unique TCS,
the map
\[
\lambda\longmapsto W(a,b,\lambda)
\]
is an isometric embedding of the directed interval
\[
I_{d(a,b),d(b,a)}
\]
into the space.

Applying this result with
\[
a=f,\qquad b=g,\qquad d=q_{\E},\qquad W=\mathbb W,
\]
we obtain that
\[
\lambda\longmapsto \mathbb W(f,g,\lambda)
\]
is an isometric embedding of
\[
I_{q_{\E}(f,g),q_{\E}(g,f)}
\]
into \((\E(X,\|\cdot\|),q_{\E})\).  This is exactly the stated formula.
\end{proof}

\begin{remark}\label{rem:segmentIsom-consequences}
Under uniqueness, every nontrivial hull segment \(S[f,g]\) is quasi-isometric
to a standard directed interval.  The asymmetry of the segment is measured by
the two numbers
\[
q_{\E}(f,g)
\qquad\text{and}\qquad
q_{\E}(g,f).
\]
If these two numbers are unequal, then the segment has different forward and
backward lengths.  In particular, if \(q_{\E}(g,f)=0\) and \(q_{\E}(f,g)>0\),
then the segment behaves like an oriented interval modelled on the standard
quasi-metric \(u(s,t)=s\dot{-}t\).
\end{remark}

\subsection{A uniqueness question for lifted hull convexity}

The preceding theorem shows that uniqueness of \(\mathbb W\) has strong
geometric consequences.  However, uniqueness of the lifted convexity structure
is not automatic.  It should therefore be regarded as an additional rigidity
property of the Isbell-convex hull.

\paragraph{Open problem.}
Characterise those asymmetrically normed real vector spaces
\((X,\|\cdot\|)\) for which the lifted Takahashi convexity structure
\[
\mathbb W(f,g,\lambda)=\lambda f\oplus(1-\lambda)g
\]
is the unique Takahashi convexity structure on
\[
(\E(X,\|\cdot\|),q_{\E}).
\]

\begin{remark}\label{rem:unique-programme}
The preceding open problem is a natural continuation of the uniqueness theory
for convexity structures in \(T_0\)-quasi-metric spaces.  A positive answer in
important classes of asymmetrically normed spaces would show that the
Isbell-convex hull does not merely admit a lifted convexity structure, but
that this structure is forced by the directed metric geometry of the hull.
\end{remark}

\section{Fixed point consequences on the hull}\label{sec:fixhull}

In this section we establish fixed point consequences on the hull
\[
(\E(X,\|\cdot\|),\,q_{\E},\,\mathbb{W})
\]
by adapting the Chebyshev-centre/normal-structure method from convex $T_{0}$-quasi-metric spaces.
The proofs follow the quasi-metric setting of K\"unzi and Yildiz \cite{KunziYildiz2016} and the
fixed point arguments in \cite{Olela2015,OlelaZweni2023}, extending Takahashi's classical theory
for convex metric spaces \cite{Takahashi1970}.

\subsection{Double closure, property (H), and Chebyshev centres}

\begin{definition}\label{def:doubleclosureHull}
For $A\subseteq \E(X,\|\cdot\|)$ define the \emph{double closure} by
\[
\cl_{\tau(q_{\E})}A\ \cap\ \cl_{\tau(q_{\E}^{t})}A,
\]
where $\tau(q_{\E})$ and $\tau(q_{\E}^{t})$ denote the topologies induced by $q_{\E}$ and its
conjugate $q_{\E}^{t}(f,g)=q_{\E}(g,f)$. We call $A$ \emph{doubly closed} if it equals its
double closure.
\end{definition}

\begin{definition}\label{def:Hhull}
We say that $(\E,q_{\E},\mathbb{W})$ has \emph{property {\rm(H)}} if every totally ordered family
$\{C_{i}\}_{i\in I}$ of nonempty, bounded, doubly closed and $\mathbb{W}$-convex subsets of $\E$
(with $C_{j}\subseteq C_{i}$ whenever $i\le j$) has nonempty intersection:
\[
\bigcap_{i\in I} C_{i}\neq\varnothing.
\]
\end{definition}

\begin{definition}\label{def:ChebHull}
Let $A\subseteq \E$ be nonempty and bounded. For $f\in \E$ define
\[
r_f(A)_{q_{\E}}:=\sup_{g\in A}q_{\E}(f,g),\qquad
r_f(A)_{q_{\E}^{t}}:=\sup_{g\in A}q_{\E}(g,f),
\]
\[
r_f(A):=r_f(A)_{q_{\E}}\vee r_f(A)_{q_{\E}^{t}},\qquad
r(A):=\inf_{f\in A} r_f(A),
\]
and
\[
\diam(A):=\sup\{\,q_{\E}(f,g): f,g\in A\,\}.
\]
The \emph{Chebyshev centre} of $A$ is the set
\[
C(A):=\{f\in A:\ r_f(A)=r(A)\}.
\]
\end{definition}

\begin{remark}\label{rem:ChebHull-remarks}
\begin{enumerate}[label=(\alph*),leftmargin=2.2em]
\item By boundedness of $A$, all quantities in Definition~\ref{def:ChebHull} are finite.
\item Throughout, $\diam(A)$ is taken with respect to $q_{\E}$ (not $q_{\E}^{s}$), in line with
the directed quasi-metric setting; compare \cite{KunziYildiz2016,OlelaZweni2023}.
\end{enumerate}
\end{remark}

\subsection{Chebyshev centres: nonemptiness and $\mathbb{W}$-convexity}

\begin{theorem}\label{thm:ChebConvex}
Assume $(\E,q_{\E},\mathbb{W})$ has property {\rm(H)}.
If $A\subseteq \E$ is nonempty, bounded, doubly closed and $\mathbb{W}$-convex, then the Chebyshev centre
$C(A)$ is nonempty, bounded, doubly closed and $\mathbb{W}$-convex.
\end{theorem}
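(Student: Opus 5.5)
We follow the standard Chebyshev-centre argument of Takahashi, adapted to the directed setting of K\"unzi--Yildiz. For $f\in\E$ and $r\ge0$ we use the two closed balls
\[
C_{q_{\E}}(f,r)=\{g:q_{\E}(f,g)\le r\},\qquad C_{q_{\E}^{t}}(f,r)=\{g:q_{\E}(g,f)\le r\}.
\]
For $n\in\mathbb N$ we set
\[
A_n:=A\cap\bigcap_{g\in A}\Bigl(C_{q_{\E}}\bigl(g,r(A)+\tfrac1n\bigr)\cap C_{q_{\E}^{t}}\bigl(g,r(A)+\tfrac1n\bigr)\Bigr).
\]
The key observation is that $f\in A_n$ if and only if $f\in A$ and
\[
\sup_{g\in A}q_{\E}(g,f)\le r(A)+\tfrac1n
\quad\text{and}\quad
\sup_{g\in A}q_{\E}(f,g)\le r(A)+\tfrac1n,
\]
that is, $r_f(A)\le r(A)+1/n$. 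Hence $C(A)=\bigcap_n A_n$, and each $A_n$ is nonempty by the definition of $r(A)$ as an infimum.

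\textbf{Convexity.} The first step is that closed balls are $\mathbb W$-convex. If $q_{\E}(g,f),q_{\E}(g,f')\le r$, then the first inequality of Theorem~\ref{thm:HullConvex} gives
\[
q_{\E}\bigl(g,\mathbb W(f,f',\lambda)\bigr)\le\lambda r+(1-\lambda)r=r,
\]
and the second inequality handles the conjugate balls. Intersections of $\mathbb W$-convex sets are $\mathbb W$-convex, so each $A_n$ and $C(A)=\bigcap_n A_n$ are $\mathbb W$-convex.

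\textbf{Double closedness.} The second step is that closed balls are doubly closed. The ball $\{h:q_{\E}(g,h)\le r\}$ is $\tau(q_{\E}^t)$-closed. Indeed, if $h$ lies in its $\tau(q_{\E}^t)$-closure, then for each $\varepsilon>0$ there is $h'$ in the ball with $q_{\E}(h',h)<\varepsilon$, so
\[
q_{\E}(g,h)\le q_{\E}(g,h')+q_{\E}(h',h)<r+\varepsilon.
\]
Symmetrically, $\{h:q_{\E}(h,g)\le r\}$ is $\tau(q_{\E})$-closed. A set closed in either topology contains its double closure, and the double-closure operator is monotone. So if $B=A\cap\bigcap_k K_k$ where each set is closed in one of the two topologies, then
\[
\cl_{\tau(q_{\E})}B\cap\cl_{\tau(q_{\E}^t)}B\subseteq \cl_{\tau(q_{\E})}A\cap\cl_{\tau(q_{\E}^t)}A \cap\bigcap_k K_k=A\cap\bigcap_kK_k=B.
\]
This uses that $A$ is doubly closed and that each $K_k$ contains the appropriate closure of $B$. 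Therefore every $A_n$ and $C(A)$ are doubly closed. Boundedness of $A_n$ and $C(A)$ is inherited from $A$.

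\textbf{Nonemptiness.} The decreasing sequence $A_1\supseteq A_2\supseteq\cdots$ is a totally ordered family of nonempty, bounded, doubly closed, $\mathbb W$-convex sets. Property (H) then gives
\[
\bigcap_nA_n=C(A)\neq\varnothing.
\]

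The main obstacle is the double-closedness step. The two ball families are closed in opposite topologies, and the argument must show that the mixed intersection with $A$ still equals its double closure. We handle this through the monotonicity argument above rather than by claiming that the balls themselves are closed in both topologies, which may fail. The rest is a routine application of Theorem~\ref{thm:HullConvex} and property (H).
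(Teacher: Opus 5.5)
Your proof is correct and has the same skeleton as the paper's. Your $A_n$ coincides with the paper's $C_n=\{f\in A: r_f(A)\le r(A)+\tfrac1n\}$, and property (H) applied to this decreasing sequence gives $C(A)=\bigcap_n C_n\neq\varnothing$.

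Where you differ is in how the properties of $C_n$ are verified. The paper works directly with the functional $f\mapsto r_f(A)$. It takes suprema in the two inequalities of Theorem~\ref{thm:HullConvex} to get $r_{\mathbb{W}(f,g,\lambda)}(A)\le\lambda r_f(A)+(1-\lambda)r_g(A)$. It gives a separate triangle-inequality estimate for boundedness, which is unnecessary since $C_n\subseteq A$, as you observe. It proves double closedness using a single net in $C_n$ converging to $f$ in both $\tau(q_{\E})$ and $\tau(q_{\E}^{t})$.

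You instead write $C_n$ as $A$ intersected with closed balls. Each ball is $\mathbb{W}$-convex and closed in one of the two topologies, and you conclude using monotonicity of the double-closure operator.

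This gains you something real on the closedness step. A point of $\cl_{\tau(q_{\E})}C_n\cap\cl_{\tau(q_{\E}^{t})}C_n$ is in general approached by one net in $\tau(q_{\E})$ and a different net in $\tau(q_{\E}^{t})$. Already on $(\mathbb R,u)$, the point $0$ lies in the double closure of $\{-1,1\}$, yet no net from $\{-1,1\}$ converges to $0$ in both topologies. So the paper's single-net argument, as written, only establishes closedness in $\tau(q_{\E}^{s})$. To match the definition of double closure it must be split into the two one-sided estimates $r_f(A)_{q_{\E}}\le r(A)+\tfrac1n$ and $r_f(A)_{q_{\E}^{t}}\le r(A)+\tfrac1n$, each obtained from its own net. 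Your ball argument performs exactly this split automatically.

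The paper's route, in exchange, is shorter and needs no auxiliary facts about balls.
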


\begin{proof}
Fix $n\in\mathbb{N}$ and set
\[
C_n:=\Bigl\{\,f\in A:\ r_f(A)\le r(A)+\tfrac{1}{n}\,\Bigr\}.
\]
Each $C_n$ is nonempty since $r(A)$ is an infimum, and $C_{n+1}\subseteq C_n$.

To verify that $C_n$ is bounded, fix $f_0\in C_n$. For any $f\in C_n$ and any $g\in A$, the triangle inequality gives
\[
q_{\E}(f_0,f)\le q_{\E}(f_0,g)+q_{\E}(g,f)\le r_{f_0}(A)_{q_{\E}}+r_f(A)_{q_{\E}^{t}}
\le r_{f_0}(A)+r_f(A)\le 2\Bigl(r(A)+\tfrac1n\Bigr).
\]

Next, $C_n$ is $\mathbb{W}$-convex. Let $f,g\in C_n$ and $\lambda\in[0,1]$. For any $h\in A$,
Theorem~\ref{thm:HullConvex} yields
\[
q_{\E}\bigl(\mathbb{W}(f,g,\lambda),h\bigr)\le \lambda q_{\E}(f,h)+(1-\lambda)q_{\E}(g,h),
\]
and
\[
q_{\E}\bigl(h,\mathbb{W}(f,g,\lambda)\bigr)\le \lambda q_{\E}(h,f)+(1-\lambda)q_{\E}(h,g).
\]
Taking suprema over $h\in A$ gives
\[
r_{\mathbb{W}(f,g,\lambda)}(A)_{q_{\E}}
\le \lambda r_f(A)_{q_{\E}}+(1-\lambda)r_g(A)_{q_{\E}},
\qquad
r_{\mathbb{W}(f,g,\lambda)}(A)_{q_{\E}^{t}}
\le \lambda r_f(A)_{q_{\E}^{t}}+(1-\lambda)r_g(A)_{q_{\E}^{t}},
\]
hence
\[
r_{\mathbb{W}(f,g,\lambda)}(A)\le \lambda r_f(A)+(1-\lambda)r_g(A)\le r(A)+\tfrac1n.
\]
Since $A$ is $\mathbb{W}$-convex and $f,g\in A$, we have $\mathbb{W}(f,g,\lambda)\in A$, and therefore
$\mathbb{W}(f,g,\lambda)\in C_n$.

We now show that $C_n$ is doubly closed. Let $(f_\gamma)$ be a net in $C_n$ converging to $f$ in both
$\tau(q_{\E})$ and $\tau(q_{\E}^{t})$, so $q_{\E}(f,f_\gamma)\to 0$ and $q_{\E}(f_\gamma,f)\to 0$.
Since $A$ is doubly closed and each $f_\gamma\in A$, we have $f\in A$. For any $h\in A$,
\[
q_{\E}(f,h)\le q_{\E}(f,f_\gamma)+q_{\E}(f_\gamma,h),\qquad
q_{\E}(h,f)\le q_{\E}(h,f_\gamma)+q_{\E}(f_\gamma,f),
\]
and taking suprema over $h\in A$ yields
\[
r_f(A)\le \bigl(q_{\E}(f,f_\gamma)\vee q_{\E}(f_\gamma,f)\bigr)+r_{f_\gamma}(A)
\le \bigl(q_{\E}(f,f_\gamma)\vee q_{\E}(f_\gamma,f)\bigr)+r(A)+\tfrac1n.
\]
Letting $\gamma$ tend along the net gives $r_f(A)\le r(A)+\tfrac1n$, hence $f\in C_n$.

Finally, property {\rm(H)} implies $\bigcap_{n\ge 1}C_n\neq\varnothing$. Any $f$ in this intersection satisfies
$r_f(A)\le r(A)+\frac1n$ for all $n$, hence $r_f(A)=r(A)$ and $f\in C(A)$. Therefore $C(A)\neq\varnothing$.
Moreover, $C(A)=\bigcap_{n\ge 1}C_n$ is bounded, doubly closed and $\mathbb{W}$-convex as an intersection of
sets with these properties.
\end{proof}

\subsection{Normal structure and fixed points}

\begin{definition}\label{def:NormalHull}
We say that $(\E,q_{\E},\mathbb{W})$ has \emph{normal structure} if for every nonempty, bounded,
doubly closed, $\mathbb{W}$-convex set $A\subseteq \E$ with $\diam(A)>0$ one has
\[
r(A)<\diam(A).
\]
\end{definition}

\begin{theorem}\label{thm:FixHull}
Assume $(\E,q_{\E},\mathbb{W})$ has property {\rm(H)} and normal structure.
Let $K\subseteq \E$ be nonempty, bounded, doubly closed and $\mathbb{W}$-convex.
If $T:(K,q_{\E})\to (K,q_{\E})$ is nonexpansive, then $T$ has a fixed point in $K$.
\end{theorem}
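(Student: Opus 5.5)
Following Takahashi and Kirk, we apply Zorn's lemma to the family $\mathcal F$ of all nonempty, bounded, doubly closed, $\mathbb W$-convex subsets $A\subseteq K$ with $T(A)\subseteq A$. This family contains $K$. We order it by reverse inclusion. Every chain in $\mathcal F$ has nonempty intersection by property (H). That intersection is again bounded, doubly closed, $\mathbb W$-convex and $T$-invariant, since all four properties pass to arbitrary intersections. Hence $\mathcal F$ has a minimal element $K_0$. The goal is then to show that $\diam(K_0)=0$. Once this holds, $q_{\E}(f,g)=0$ for all $f,g\in K_0$, and the $T_0$ property forces $K_0$ to be a single point, which must then be fixed by $T$.

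Suppose instead that $\diam(K_0)>0$. We need a doubly closed, $\mathbb W$-convex hull operator on subsets of $K_0$. We take $\operatorname{co}(B)$ to be the intersection of all doubly closed $\mathbb W$-convex subsets of $K_0$ that contain $B$. Put $B=T(K_0)$. Then $T(\operatorname{co}(B))\subseteq T(K_0)=B\subseteq\operatorname{co}(B)$, so $\operatorname{co}(B)$ lies in $\mathcal F$. Minimality gives $\operatorname{co}(T(K_0))=K_0$.

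Next fix $r$ with $r(K_0)\le r<\diam(K_0)$, which normal structure allows. Consider the set
\[
K_1:=\{f\in K_0:\ q_{\E}(f,g)\le r\ \text{and}\ q_{\E}(g,f)\le r\ \text{for all } g\in K_0\}.
\]
It is nonempty because it contains the Chebyshev centre $C(K_0)$, which is nonempty by Theorem~\ref{thm:ChebConvex}. It is $\mathbb W$-convex by the two Takahashi inequalities of Theorem~\ref{thm:HullConvex}, and doubly closed by the same net argument used for $C_n$ in the proof of Theorem~\ref{thm:ChebConvex}.

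The key step is invariance of $K_1$. Let $f\in K_1$. For each $g\in K_0$, nonexpansiveness gives $q_{\E}(Tf,Tg)\le q_{\E}(f,g)\le r$ and $q_{\E}(Tg,Tf)\le r$. So $T(K_0)$ lies in the set
\[
\{h\in K_0:\ q_{\E}(Tf,h)\le r,\ q_{\E}(h,Tf)\le r\}.
\]
This set is an intersection of a closed $q_{\E}$-ball and a closed $q_{\E}^t$-ball with $K_0$. It is $\mathbb W$-convex by Theorem~\ref{thm:HullConvex}. It is doubly closed because $h\mapsto q_{\E}(Tf,h)$ is upper semicontinuous for $\tau(q_{\E}^t)$ and lower semicontinuous for $\tau(q_{\E})$, and similarly for the conjugate; under double convergence both inequalities pass to the limit. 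It therefore contains $\operatorname{co}(T(K_0))=K_0$, which means $Tf\in K_1$.

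Minimality then forces $K_1=K_0$. This implies $\diam(K_0)\le r<\diam(K_0)$, a contradiction. I expect the main obstacle to be the topological bookkeeping in the quasi-metric setting. One must check that closed directed balls intersected with $K_0$ are doubly closed. One must also check that $\diam$, being taken with respect to $q_{\E}$ only, is controlled by $r$ through the bound $q_{\E}(f,g)\le r$. Both follow from the triangle-inequality estimates already used in Theorem~\ref{thm:ChebConvex}.
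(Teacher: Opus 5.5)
Your proof is correct. It differs from the paper's at exactly the step that needs care.

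The paper also passes to a minimal element $A$ of the family of nonempty, bounded, doubly closed, $\mathbb W$-convex, $T$-invariant subsets of $K$. It then argues directly that $T(C(A))\subseteq C(A)$, by taking suprema over $g\in A$ in $q_{\E}(Tf,Tg)\le q_{\E}(f,g)$ and $q_{\E}(Tg,Tf)\le q_{\E}(g,f)$. That only bounds $\sup_{h\in T(A)}q_{\E}(Tf,h)$ and $\sup_{h\in T(A)}q_{\E}(h,Tf)$ by $r(A)$. Since $T(A)\subseteq A$, the inclusion goes the wrong way, so $r_{Tf}(A)\le r(A)$ does not follow.

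Your argument supplies what is missing. Minimality gives $\operatorname{co}(T(K_0))=K_0$, and the double ball $\{h\in K_0: q_{\E}(Tf,h)\le r,\ q_{\E}(h,Tf)\le r\}$ is $\mathbb W$-convex and doubly closed. Together these transfer the bound from $T(K_0)$ to all of $K_0$; this is Kirk's closed-convex-hull device. With $r=r(K_0)$ your $K_1$ coincides with $C(K_0)$, so your proof is the paper's decomposition with the invariance step repaired. If you instead take $r(K_0)<r<\diam(K_0)$, then $K_1\neq\varnothing$ follows directly from $r(K_0)$ being an infimum. Theorem~\ref{thm:ChebConvex} is then not needed, and property (H) is used only in the Zorn step.

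Two small corrections, neither affecting validity:

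\emph{Semicontinuity.} The directions are reversed. The map $h\mapsto q_{\E}(a,h)$ is upper semicontinuous for $\tau(q_{\E})$ and lower semicontinuous for $\tau(q_{\E}^{t})$. So $\{h: q_{\E}(a,h)\le r\}$ is $\tau(q_{\E}^{t})$-closed, while $\{h: q_{\E}(h,a)\le r\}$ is $\tau(q_{\E})$-closed.

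\emph{Double closedness.} A single net converging in both topologies only tests closedness in $\tau(q_{\E}^{s})$. That is weaker than being doubly closed in the sense of Definition~\ref{def:doubleclosureHull}, because the two closures may be witnessed by different nets. The clean justification is as follows. For $E=K_0\cap\{h:q_{\E}(a,h)\le r\}\cap\{h:q_{\E}(h,a)\le r\}$ you have $\cl_{\tau(q_{\E})}E\subseteq\cl_{\tau(q_{\E})}K_0\cap\{h:q_{\E}(h,a)\le r\}$. You also have $\cl_{\tau(q_{\E}^{t})}E\subseteq\cl_{\tau(q_{\E}^{t})}K_0\cap\{h:q_{\E}(a,h)\le r\}$. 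Since $K_0$ is doubly closed, the double closure of $E$ lies in $E$. The same reasoning applies to $K_1$, which is an intersection of sets of this form.
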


\begin{proof}
Consider the family
\[
\Gamma:=\Bigl\{\,D\subseteq K:\ D\neq\varnothing,\ D\ \text{bounded, doubly closed, $\mathbb{W}$-convex, and } T(D)\subseteq D\,\Bigr\}
\]
ordered by inclusion. Clearly $K\in\Gamma$. If $\{D_i\}_{i\in I}$ is a chain in $\Gamma$, then
$D:=\bigcap_{i\in I}D_i$ is nonempty by property {\rm(H)}, and it is bounded, doubly closed,
$\mathbb{W}$-convex and $T$-invariant; hence $D\in\Gamma$. By Zorn's lemma, $\Gamma$ has a minimal element $A$.

We claim that $\diam(A)=0$. Suppose $\diam(A)>0$. Since $A\in\Gamma$, the set $A$ is nonempty, bounded,
doubly closed and $\mathbb{W}$-convex, so by Theorem~\ref{thm:ChebConvex} the Chebyshev centre $C(A)$ is
nonempty, bounded, doubly closed and $\mathbb{W}$-convex. We show $T(C(A))\subseteq C(A)$.
Let $f\in C(A)$ and $g\in A$. Nonexpansiveness gives
\[
q_{\E}\bigl(T(f),T(g)\bigr)\le q_{\E}(f,g),\qquad q_{\E}\bigl(T(g),T(f)\bigr)\le q_{\E}(g,f).
\]
Taking suprema over $g\in A$ and using $T(A)\subseteq A$ yields $r_{T(f)}(A)\le r_f(A)=r(A)$.
Since $T(f)\in A$, also $r(A)\le r_{T(f)}(A)$, so $r_{T(f)}(A)=r(A)$ and hence $T(f)\in C(A)$.

Normal structure gives $r(A)<\diam(A)$, so there exist $a,b\in A$ with $q_{\E}(a,b)>r(A)$.
Then $r_a(A)\ge q_{\E}(a,b)>r(A)$, hence $a\notin C(A)$ and therefore $C(A)\subsetneq A$.
Since $C(A)$ is nonempty, bounded, doubly closed, $\mathbb{W}$-convex and $T$-invariant, we have $C(A)\in\Gamma$,
contradicting the minimality of $A$. Hence $\diam(A)=0$.

Pick any $a\in A$. Then $q_{\E}(a,b)=0=q_{\E}(b,a)$ for all $b\in A$, so $A=\{a\}$ by the $T_{0}$ property
of $q_{\E}$ (Theorem~\ref{thm:qEisometric}). Since $T(A)\subseteq A$, we have $T(a)=a$.
\end{proof}

\begin{theorem}\label{thm:CommonFix}
Assume $(\E,q_{\E},\mathbb{W})$ has property {\rm(H)} and normal structure, and that $\mathbb{W}$
is a unique convex structure on $(\E,q_{\E})$ (Definition~\ref{def:uniqW}).
Let $K\subseteq \E$ be nonempty, bounded, doubly closed and $\mathbb{W}$-convex.
Then every commuting finite family $\{T_1,\dots,T_n\}$ of nonexpansive self-maps on $(K,q_{\E})$
has a nonempty common fixed point set:
\[
\bigcap_{i=1}^{n}\Fix(T_i)\neq\varnothing.
\]
\end{theorem}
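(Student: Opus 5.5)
The plan is to argue by induction on \(n\). At each step we pass to the common fixed point set of the maps handled so far and show it is again an admissible domain for Theorem~\ref{thm:FixHull}. The case \(n=1\) is exactly Theorem~\ref{thm:FixHull}.

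For the inductive step, set \(F:=\bigcap_{i=1}^{n-1}\Fix(T_i)\subseteq K\), which is nonempty by the induction hypothesis applied to \(T_1,\dots,T_{n-1}\). Since \(T_n\) commutes with each \(T_i\), for \(f\in F\) we have \(T_i(T_n f)=T_n(T_i f)=T_n f\), so \(T_n(F)\subseteq F\). It then suffices to show that \(F\) is nonempty, bounded, doubly closed and \(\mathbb W\)-convex. Once this is done, \(T_n|_F\) is a nonexpansive self-map of \(F\), Theorem~\ref{thm:FixHull} gives a fixed point of \(T_n\) in \(F\), and that point is a common fixed point of all \(n\) maps. Boundedness is inherited from \(K\), and an intersection of doubly closed (respectively \(\mathbb W\)-convex) sets is again such. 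So it is enough to check these two properties for a single \(\Fix(T)\), with \(T\) a nonexpansive self-map of \(K\).

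\emph{Double closedness.} Let \((f_\gamma)\subseteq\Fix(T)\) converge to \(f\) in both \(\tau(q_{\E})\) and \(\tau(q_{\E}^t)\). Then \(f\in K\), because \(K\) is doubly closed. The triangle inequality and nonexpansiveness give
\[
q_{\E}(f,Tf)\le q_{\E}(f,f_\gamma)+q_{\E}(Tf_\gamma,Tf)\le q_{\E}(f,f_\gamma)+q_{\E}(f_\gamma,f)\to0,
\]
and the symmetric estimate gives \(q_{\E}(Tf,f)\to 0\). Hence \(Tf=f\) by the \(T_0\) property.

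\emph{\(\mathbb W\)-convexity.} This is the step where uniqueness is used, and I expect it to be the main obstacle. Take \(f,g\in\Fix(T)\), \(\lambda\in[0,1]\), and put \(w=\mathbb W(f,g,\lambda)\in K\). I would aim to show that \(h:=Tw\) satisfies the two inequalities in Definition~\ref{def:uniqW}, because uniqueness then forces \(Tw=w\). For \(u\in K\), nonexpansiveness together with Theorem~\ref{thm:HullConvex} gives
\[
q_{\E}(Tw,Tu)\le q_{\E}(w,u)\le\lambda q_{\E}(f,u)+(1-\lambda)q_{\E}(g,u).
\]
The difficulty is that Definition~\ref{def:uniqW} requires the inequalities for every \(u\in\E\), with \(u\) itself and not \(Tu\), whereas \(T\) is defined only on \(K\). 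The first thing I would try is a direct route, which follows the segment argument of K\"unzi--Yildiz. For \(f,g\) fixed, nonexpansiveness applied to the pairs \((w,f)\) and \((w,g)\) yields
\[
q_{\E}(Tw,f)\le q_{\E}(w,f)\quad\text{and}\quad q_{\E}(Tw,g)\le q_{\E}(w,g),
\]
and similarly in the reverse direction. By Theorem~\ref{thm:segmentIsom}, \(q_{\E}(f,w)=(1-\lambda)q_{\E}(f,g)\), \(q_{\E}(w,g)=\lambda q_{\E}(f,g)\), and likewise for the reverse distances. Combining these with the triangle inequality \(q_{\E}(f,g)\le q_{\E}(f,Tw)+q_{\E}(Tw,g)\) forces equality throughout, so \(Tw\) is metrically between \(f\) and \(g\) in both directions with the same ratios as \(w\).

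From there I would verify the uniqueness inequalities for arbitrary \(u\in\E\). The trick is to write \(q_{\E}(u,Tw)\le\lambda q_{\E}(u,f)+(1-\lambda)q_{\E}(u,g)\) via the convexity inequality applied to the betweenness point, using the segment isometry to identify the betweenness point with a \(\mathbb W\)-combination. If this identification cannot be made rigorous directly, the fallback is to invoke the corresponding general lemma of K\"unzi--Yildiz for unique convexity structures, namely that between-points with matching directed ratios coincide with \(W(a,b,\lambda)\). With that lemma, \(Tw=w\) follows, and the induction closes.
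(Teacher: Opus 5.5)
Your skeleton matches the paper and is fine: induction on $n$, $T_n(F)\subseteq F$ from commutativity, boundedness inherited from $K$, and the double-closedness estimate. The gap is the $\mathbb W$-convexity of $\Fix(T)$, and that step cannot be closed under the stated hypotheses. Your betweenness computation is correct, but it only controls the distances from $Tw$ to the two endpoints $f,g$. Definition~\ref{def:uniqW}, by contrast, requires the Takahashi inequalities against every $u\in\E$. Your ``trick'' is circular, because identifying the between-point with a $\mathbb W$-combination \emph{is} the claim $Tw=w$. The fallback lemma (directed between-points with matching ratios coincide with $W(a,b,\lambda)$) is a strict-convexity property that uniqueness does not imply.

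Here is a concrete counterexample. Take $Y=\mathbb R^2$ with $\|(a,b)\|=\max\{a^+,b^+\}$, the space of Proposition~\ref{prop:counter}. It is a max-product of two copies of the Isbell-convex space $(\mathbb R,u)$, hence Isbell-convex. So $\E(Y,\|\cdot\|)\cong Y$, $\mathbb W$ is the affine map, and $q_{\E}(x,y)=\max\{(x_1-y_1)^+,(x_2-y_2)^+\}$.
\begin{itemize}
\item \emph{Uniqueness holds.} Testing the inequalities of Definition~\ref{def:uniqW} at $u=(\pm t,0),(0,\pm t)$ with $t\to\infty$ forces $h=\lambda f+(1-\lambda)g$.
\item \emph{Property (H) holds.} Bounded doubly closed sets are Euclidean compact, and a chain of nonempty compact sets has nonempty intersection.
\item \emph{Normal structure holds.} Here $r_f(A)$ and $\diam(A)$ are computed with $q_{\E}^s=\|\cdot\|_\infty$, and compact convex sets have normal structure.
\item \emph{The claim fails.} On $K=[-1,1]^2$, the map $T(a,b)=(a,a^+)$ satisfies $q_{\E}(Tx,Ty)=(x_1-y_1)^+\le q_{\E}(x,y)$. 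It fixes $f=(-1,0)$ and $g=(1,1)$. Yet $w=\mathbb W(f,g,\tfrac12)=(0,\tfrac12)$ has $Tw=(0,0)\neq w$.
\end{itemize}
Moreover, $Tw$ satisfies all four of your equalities: $q_{\E}(f,g)=0$, $q_{\E}(g,f)=2$, $q_{\E}(f,Tw)=q_{\E}(Tw,g)=0$ and $q_{\E}(g,Tw)=q_{\E}(Tw,f)=1$. So $\Fix(T)$ need not be $\mathbb W$-convex.

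You were right to flag the difficulty. The paper takes the same route and asserts $q_{\E}(u,T(\mathbb W(f,g,\lambda)))\le q_{\E}(u,\mathbb W(f,g,\lambda))$ for $u\in K$ ``by nonexpansiveness''. That inequality is only valid when $Tu=u$, and the paper checks only $u\in K$ rather than all $u\in\E$, so its proof has the same hole. To finish along your lines you need a genuinely stronger hypothesis, such as uniqueness of directed between-points with prescribed ratios; under that, your betweenness computation gives $Tw=w$ at once. Otherwise you need an argument for common fixed points that does not pass through convexity of $\Fix(T)$. In the example above, $q_{\E}$-nonexpansive maps are $\|\cdot\|_\infty$-nonexpansive, and DeMarr's theorem on compact convex sets gives the conclusion.
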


\begin{proof}
Fix a nonexpansive map $T$ on $K$. By Theorem~\ref{thm:FixHull}, $\Fix(T)\neq\varnothing$.
Clearly $\Fix(T)$ is bounded as a subset of $K$. To see that $\Fix(T)$ is doubly closed, let $(x_\gamma)$
be a net in $\Fix(T)$ converging to $x$ in both $\tau(q_{\E})$ and $\tau(q_{\E}^{t})$, so
$q_{\E}(x,x_\gamma)\to 0$ and $q_{\E}(x_\gamma,x)\to 0$. Since $x_\gamma=T(x_\gamma)$,
\[
q_{\E}(x,T(x))\le q_{\E}(x,x_\gamma)+q_{\E}(x_\gamma,T(x_\gamma))+q_{\E}(T(x_\gamma),T(x))
\le q_{\E}(x,x_\gamma)+q_{\E}(x_\gamma,x),
\]
where the last inequality uses nonexpansiveness. Letting $\gamma$ tend gives $q_{\E}(x,T(x))=0$.
Similarly,
\[
q_{\E}(T(x),x)\le q_{\E}(T(x),T(x_\gamma))+q_{\E}(T(x_\gamma),x)
\le q_{\E}(x,x_\gamma)+q_{\E}(x_\gamma,x)\to 0,
\]
so $q_{\E}(T(x),x)=0$. By the $T_{0}$ property of $q_{\E}$, we obtain $T(x)=x$, hence $x\in\Fix(T)$.

We show that $\Fix(T)$ is $\mathbb{W}$-convex. Let $f,g\in\Fix(T)$ and $\lambda\in[0,1]$.
For any $u\in K$, nonexpansiveness and Theorem~\ref{thm:HullConvex} yield
\[
q_{\E}\bigl(u,\,T(\mathbb{W}(f,g,\lambda))\bigr)
\le q_{\E}\bigl(u,\mathbb{W}(f,g,\lambda)\bigr)
\le \lambda q_{\E}(u,f)+(1-\lambda)q_{\E}(u,g),
\]
and similarly
\[
q_{\E}\bigl(T(\mathbb{W}(f,g,\lambda)),u\bigr)
\le q_{\E}\bigl(\mathbb{W}(f,g,\lambda),u\bigr)
\le \lambda q_{\E}(f,u)+(1-\lambda)q_{\E}(g,u).
\]
Thus $T(\mathbb{W}(f,g,\lambda))$ satisfies the two inequalities in Definition~\ref{def:uniqW} with endpoints
$f,g$ and parameter $\lambda$. By uniqueness of $\mathbb{W}$,
\[
T(\mathbb{W}(f,g,\lambda))=\mathbb{W}(f,g,\lambda),
\]
so $\mathbb{W}(f,g,\lambda)\in\Fix(T)$.

Now let $T_1,\dots,T_n$ commute. Put $F_1=\Fix(T_1)$, which is nonempty, bounded, doubly closed and
$\mathbb{W}$-convex. Since $T_2$ commutes with $T_1$, we have $T_2(F_1)\subseteq F_1$, hence $T_2$ restricts
to a nonexpansive self-map of $F_1$. Applying Theorem~\ref{thm:FixHull} to $F_1$ yields
$F_1\cap\Fix(T_2)\neq\varnothing$. Proceeding inductively gives $\bigcap_{i=1}^n \Fix(T_i)\neq\varnothing$.
\end{proof}

\begin{remark}\label{rem:FixHull-metric}
If one works with the symmetrisation $q_{\E}^{s}$, then $(\E,q_{\E}^{s},\mathbb{W})$ is a convex
metric space (Proposition~\ref{prop:qts}), and Theorem~\ref{thm:FixHull} reduces to Takahashi-type
fixed point results in convex metric spaces \cite{Takahashi1970}. The contribution here is that the
same fixed point argument can be carried out at the directed level, provided one uses double closure
and normal structure in the quasi-metric sense.
\end{remark}

\section{Examples and counterexamples}\label{sec:examples}

We conclude with two brief illustrations.  The first shows how the lifted
convexity behaves in the standard directed real line.  The second shows that
convexity for the symmetrised metric does not automatically imply convexity
for the underlying \(T_{0}\)-quasi-metric, even after passing to the
Isbell-convex hull.

\subsection{The standard directed real line}

\begin{example}\label{ex:Rstandard}
Let \(X=\mathbb R\) be equipped with the standard \(T_{0}\)-quasi-metric
\[
u(a,b):=a\dot{-}b=\max\{a-b,0\},
\]
and let
\[
S(x,y,\lambda)=\lambda x+(1-\lambda)y
\]
be the usual affine convexity map.  Then \((\mathbb R,u,S)\) is a convex
\(T_{0}\)-quasi-metric space.

Indeed, for \(z,x,y\in\mathbb R\) and \(\lambda\in[0,1]\),
\[
u\bigl(z,S(x,y,\lambda)\bigr)
=
\bigl(z-(\lambda x+(1-\lambda)y)\bigr)^{+}
\]
\[
=
\bigl(\lambda(z-x)+(1-\lambda)(z-y)\bigr)^{+}
\le
\lambda(z-x)^{+}+(1-\lambda)(z-y)^{+}.
\]
Thus
\[
u\bigl(z,S(x,y,\lambda)\bigr)
\le
\lambda u(z,x)+(1-\lambda)u(z,y).
\]
Similarly,
\[
u\bigl(S(x,y,\lambda),z\bigr)
=
\bigl((\lambda x+(1-\lambda)y)-z\bigr)^{+}
\le
\lambda(x-z)^{+}+(1-\lambda)(y-z)^{+},
\]
and hence
\[
u\bigl(S(x,y,\lambda),z\bigr)
\le
\lambda u(x,z)+(1-\lambda)u(y,z).
\]
Therefore \(S\) is a Takahashi convexity structure on \((\mathbb R,u)\).

Equivalently, \(u\) is induced by the asymmetric norm
\[
\|t\|=t^{+}:=\max\{t,0\},
\qquad t\in\mathbb R,
\]
since
\[
u(a,b)=\|a-b\|.
\]
The lifted hull convexity on \(\E(\mathbb R,\|\cdot\|)\) is therefore
\[
\mathbb W(f,g,\lambda)=\lambda f\oplus(1-\lambda)g.
\]
Moreover, the canonical embedding
\[
i:\mathbb R\longrightarrow \E(\mathbb R,\|\cdot\|)
\]
intertwines the affine convexity on \(\mathbb R\) with the lifted convexity on
the hull:
\[
i\bigl(S(x,y,\lambda)\bigr)
=
\mathbb W\bigl(i(x),i(y),\lambda\bigr),
\qquad x,y\in\mathbb R,\ \lambda\in[0,1].
\]
This is precisely the compatibility asserted in
Theorem~\ref{thm:Intertwine}.
\end{example}

\subsection{A counterexample after symmetrisation}

\begin{proposition}\label{prop:counter}
There exist an asymmetrically normed real vector space \((Y,\|\cdot\|)\), a
subset
\[
K\subseteq \E(Y,\|\cdot\|)
\]
and a map
\[
V:K\times K\times[0,1]\longrightarrow K
\]
such that \(V\) is a Takahashi convexity structure on the metric space
\[
(K,q_{\E}^{s}),
\]
but \(V\) is not a Takahashi convexity structure on the \(T_{0}\)-quasi-metric
space
\[
(K,q_{\E}).
\]
\end{proposition}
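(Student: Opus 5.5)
The plan is to construct the simplest possible example: take \(Y=\mathbb R\) with \(\|t\|=t^{+}\), so that \(q_{\|\cdot\|}=u\) as in Example~\ref{ex:Rstandard}, and let \(K=i(Y)=\{f_a:a\in\mathbb R\}\). By Theorem~\ref{thm:qEisometric}, the canonical embedding is an isometry from \((\mathbb R,u)\) onto \((K,q_{\E})\). It is therefore also an isometry for the symmetrisations, and the question transfers entirely to \(\mathbb R\). There \(u^{s}(a,b)=|a-b|\). So it suffices to find a map \(V_0:\mathbb R\times\mathbb R\times[0,1]\to\mathbb R\) that is a Takahashi convexity structure for \(|\cdot|\) but not for \(u\), and then to set
\[
V(f_x,f_y,\lambda):=f_{V_0(x,y,\lambda)}.
\]
This is well defined because \(i\) is injective, and it takes values in \(K\).

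For \(V_0\) I would use the reflected affine map
\[
V_0(x,y,\lambda):=-\bigl(\lambda(-x)+(1-\lambda)(-y)\bigr),
\]
but that equals the affine map, so it does not help. Instead I would take a non-affine choice that is still metrically convex. The natural candidate is a reparametrised segment, for instance
\[
V_0(x,y,\lambda)=\lambda x+(1-\lambda)y
\]
for \(x\le y\), and \(V_0(x,y,\lambda)=(1-\lambda)x+\lambda y\) for \(x>y\). A quick test shows this fails even for \(|\cdot|\) at \(z=x\).

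So the cleaner route is to recall that on \((\mathbb R,|\cdot|)\) the Takahashi inequalities with \(z=x\) and \(z=y\) force \(V_0(x,y,\lambda)\) to be the affine point. Consequently no counterexample lives on \(i(\mathbb R)\) alone, and \(K\) must be at least two-dimensional. I would therefore take \(Y=\mathbb R^2\) with
\[
\|(s,t)\|=s^{+}\vee t^{+}.
\]
Its symmetrisation is the \(\ell^\infty\)-norm, which has many metric midpoints. I would again take \(K=i(Y)\) and pull everything back to \(Y\). Choose \(V_0\) to be a nonlinear \(\ell^\infty\)-convex combination that still satisfies the \(\ell^\infty\) Takahashi inequalities. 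One candidate is the componentwise construction on a suitable subset where \(\ell^\infty\) geodesics bend, for instance between \(x=(0,0)\) and \(y=(2,0)\) sending the midpoint to \((1,1)\), which is \(\ell^\infty\)-metrically between them; a candidate witness for failure in \(u\)-direction is then a point \(z\) such as \(z=(1,0)\) or \(z=(0,0)\).

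The first key step is to verify the metric Takahashi inequalities for \(V_0\) on the restricted domain. To make this manageable, I would shrink \(K\) to a small finite-dimensional piece, say \(K=\{f_p: p\in P\}\) with \(P\) a triangle containing \(x\), \(y\) and the bent path, and define \(V_0\) on \(P\) explicitly. The second step is to exhibit \(z\in P\) with
\[
q\bigl(z,V_0(x,y,\tfrac12)\bigr)>\tfrac12 q(z,x)+\tfrac12 q(z,y).
\]
For the candidate above, \(q(z,(1,1))=\|z-(1,1)\|\). With \(z=(0,0)\) this is \(0\), which gives no violation, so the test point must be placed at a larger second coordinate, with the direction of the bend chosen so that the asymmetric norm detects it.

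The main obstacle is choosing \(P\) and \(V_0\) so that \(V_0\) maps \(P\times P\times[0,1]\) into \(P\) and satisfies every \(\ell^\infty\) inequality for all \(z\in P\), while still breaking one directed inequality. I would try first a reflection trick. Define
\[
V_0(x,y,\lambda)=\lambda x+(1-\lambda)y
\]
when \(x\) and \(y\) are \(\le\)-comparable in the specialisation order. When they are incomparable, use instead the other \(\ell^\infty\)-geodesic through the corner \((x_1\vee y_1,\, x_2\wedge y_2)\). That corner is metrically between \(x\) and \(y\) for \(\ell^\infty\), but for the directed norm it shortens one direction at the expense of the other.
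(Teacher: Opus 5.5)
Your reduction step is correct and is the same as the paper's. Since \(i\) is an isometry \((Y,q)\to(\E(Y,\|\cdot\|),q_{\E})\), it is also an isometry for the symmetrisations, so any map \(V_0\) on a subset \(P\subseteq Y\) that is a TCS for \(q^{s}\) but not for \(q\) transports to \(K=i(P)\). The paper then cites the K\"unzi--Yildiz example on \(C=[0,1]^2\) for the existence of such a map.

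The gap is that you never produce \(V_0\). The proposal stops at ``the main obstacle'', and the concrete candidates do not work as stated.

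\begin{itemize}
\item The final ``reflection trick'' is degenerate. The specialisation order of \(q\) is the coordinatewise order, and for incomparable \(x,y\) the point \((x_1\vee y_1,\,x_2\wedge y_2)\) is always \(x\) or \(y\) itself. The genuine rectangle corners \((x_1,y_2)\) and \((y_1,x_2)\) do not help either: when both coordinates differ they are not \(\ell^\infty\)-between \(x\) and \(y\). For \(x=(0,2)\), \(y=(1,0)\), the route via \((0,0)\) has length \(3>2\).
\item Being a metric midpoint is not enough, because the \(q^{s}\)-inequalities must hold for every \(z\). Your bent point \((1,1)\) for \((0,0),(2,0)\) fails at \(z=(1,-1)\), where the left side is \(2\) and the right side is \(1\). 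On all of \(\mathbb R^2\), letting \(z\to\pm\infty\) along the axes even forces the affine point. So a bounded domain excluding such \(z\) is essential, and you must specify it.
\item You look for the violation in the wrong inequality. For this candidate, \(q(z,(1,1))\le\tfrac12q(z,(0,0))+\tfrac12q(z,(2,0))\) holds for every \(z\in\mathbb R^2\), so raising the second coordinate of \(z\) cannot help. The failure has to come from the conjugate inequality.
\end{itemize}

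The missing observation is that the Takahashi inequalities are imposed separately for each triple \((x,y,\lambda)\). You therefore do not need a global family of bent geodesics; it suffices to alter the affine map at a single triple.

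Take \(P=[0,2]\times[0,1]\), and let \(V_0\) be the affine map except that \(V_0\bigl((0,0),(2,0),\tfrac12\bigr):=(1,1)\).

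\begin{itemize}
\item For every \(z\in P\),
\(q^{s}(z,(1,1))\le 1=\tfrac12q^{s}((0,0),(2,0))\le\tfrac12q^{s}(z,(0,0))+\tfrac12q^{s}(z,(2,0))\).
Hence \(V_0\) is a TCS on \((P,q^{s})\), and it maps into \(P\).
\item At \(z=(1,0)\in P\),
\(q((1,1),(1,0))=1>\tfrac12=\tfrac12q((0,0),(1,0))+\tfrac12q((2,0),(1,0))\).
So the second directed inequality fails.
\end{itemize}

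With this \(V_0\) inserted, your transport argument is complete. It is then more self-contained than the paper's proof, which relies on the cited example without exhibiting the map.
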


\begin{proof}
We use the example of K\"unzi and Yildiz showing that a map may be a
Takahashi convexity structure for the symmetrised metric but fail to be one
for the underlying \(T_{0}\)-quasi-metric.

Let \(Y=\mathbb R^{2}\) be equipped with the asymmetric norm
\[
\|(a,b)\|=\max\{a^{+},b^{+}\},
\qquad (a,b)\in\mathbb R^{2}.
\]
Let
\[
q(y_1,y_2)=\|y_1-y_2\|
\]
be the induced \(T_{0}\)-quasi-metric, and let
\[
C=[0,1]\times[0,1]\subseteq Y.
\]
K\"unzi and Yildiz exhibit a map
\[
W:C\times C\times[0,1]\longrightarrow C
\]
which is a Takahashi convexity structure on the symmetrised metric space
\[
(C,q^{s}),
\]
but is not a Takahashi convexity structure on the quasi-metric space
\[
(C,q).
\]

Now embed \(Y\) into its Isbell-convex hull by the canonical isometric
embedding
\[
i:Y\longrightarrow \E(Y,\|\cdot\|).
\]
Put
\[
K:=i(C)\subseteq \E(Y,\|\cdot\|).
\]
Define
\[
V:K\times K\times[0,1]\longrightarrow K
\]
by transporting \(W\) along \(i\):
\[
V(i(x),i(y),\lambda):=i\bigl(W(x,y,\lambda)\bigr),
\qquad x,y\in C,\ \lambda\in[0,1].
\]
Since \(i\) is an isometry from \((Y,q)\) into
\((\E(Y,\|\cdot\|),q_{\E})\), its restriction identifies
\[
(C,q)
\quad\text{with}\quad
(K,q_{\E}|_{K\times K}),
\]
and also identifies
\[
(C,q^{s})
\quad\text{with}\quad
(K,q_{\E}^{s}|_{K\times K}).
\]
Therefore \(V\) is a Takahashi convexity structure on the metric space
\((K,q_{\E}^{s})\).

If \(V\) were also a Takahashi convexity structure on the quasi-metric space
\((K,q_{\E})\), then transporting it back through the isometry \(i\) would make
\(W\) a Takahashi convexity structure on \((C,q)\).  This contradicts the
K\"unzi--Yildiz example.  Hence \(V\) fails to be a Takahashi convexity
structure on \((K,q_{\E})\).
\end{proof}

\begin{remark}\label{rem:counter-meaning}
Proposition~\ref{prop:counter} shows that the two directed inequalities in
the definition of a Takahashi convexity structure cannot be replaced by
ordinary convexity for the symmetrised metric.  The directed geometry of
\(q_{\E}\) contains information that is lost after passing to \(q_{\E}^{s}\).
\end{remark}

\section{Concluding remarks and further directions}
\label{sec:conclusion}

We have shown that the Isbell-convex hull of an asymmetrically normed real
vector space carries a natural lifted Takahashi convexity structure.  The
construction uses two canonical ingredients already present in the hull: the
Isbell-hull di-metric \(q_{\E}\) and the vector-space operations
\((\oplus,\cdot)\).  Thus the barycentric formula
\[
\mathbb W(f,g,\lambda)=\lambda f\oplus(1-\lambda)g
\]
does not impose an external convexity structure on the hull; rather, it is
intrinsic to the algebraic and directed metric structure of
\(\E(X,\|\cdot\|)\).

The main point of the paper is therefore structural.  Earlier work showed that
minimal pairs in the Isbell-convex hull are \(W\)-convex when viewed as
function pairs on the original space \(X\).  In contrast, the present paper
places convexity directly on the hull itself.  This distinction allows one to
study segments, uniqueness, double closure, and fixed point notion inside
\[
(\E(X,\|\cdot\|),q_{\E},\mathbb W),
\]
rather than only through the behaviour of hull elements as functions on \(X\).

Several questions remain open.

\paragraph{Uniqueness of the lifted structure.}
The segment theory developed above shows that uniqueness of \(\mathbb W\) has
strong geometric consequences.  In particular, under uniqueness, every
nontrivial hull segment is isometrically parametrised by a directed interval.
It is therefore natural to ask for intrinsic conditions on
\((X,\|\cdot\|)\) guaranteeing that \(\mathbb W\) is the unique Takahashi
convexity structure on \((\E(X,\|\cdot\|),q_{\E})\).

\paragraph{Normal structure in the hull.}
The fixed point consequences depend on normal-structure assumptions for
bounded, doubly closed, \(\mathbb W\)-convex subsets of the hull.  A useful
next step would be to characterise those subsets of
\(\E(X,\|\cdot\|)\) that have normal structure.  This would clarify when the
fixed point results are genuinely new and when they reduce to known metric or
quasi-metric fixed point principles.

\paragraph{Barycentric and higher-dimensional convexity.}
The present paper works with binary Takahashi convexity maps.  Since
\(\E(X,\|\cdot\|)\) is a real vector space, one may also consider
higher-dimensional barycentric maps of the form
\[
(f_1,\ldots,f_n;\lambda_1,\ldots,\lambda_n)
\longmapsto
\lambda_1 f_1\oplus\cdots\oplus \lambda_n f_n,
\]
where \(\lambda_i\ge0\) and \(\sum_{i=1}^{n}\lambda_i=1\).  It would be
interesting to determine which higher-dimensional analogues of Takahashi
convexity, property \({\rm(S)}\), and uniqueness hold for these maps.

\paragraph{Relation with directed bicombings.}
The maps
\[
\gamma_{f,g}(\lambda)=\mathbb W(f,g,\lambda)
\]
define canonical directed paths in the Isbell-convex hull.  This suggests a
possible connection with bicombing-type structures in metric geometry.  A
future direction is to develop an asymmetric version of conical or convex
bicombings using the lifted Takahashi maps on Isbell-convex hulls.

\paragraph{Stability under hull constructions.}
Another natural question concerns functoriality.  Given a nonexpansive or
linear contraction
\[
T:(X,\|\cdot\|_X)\longrightarrow (Y,\|\cdot\|_Y),
\]
one may ask whether \(T\) induces a map between the corresponding
Isbell-convex hulls that preserves the lifted convexity structures.  Such a
result would strengthen the categorical meaning of the construction and could
lead to a broader theory of convexity-preserving hull extensions.


\end{document}